\theoremstyle{plain}
\newtheorem{theorem}{Theorem}[section]
\newtheorem{lemma}[theorem]{Lemma}
\theoremstyle{remark}
\newtheorem{remark}[theorem]{Remark}
\numberwithin{equation}{section}
\title[Degenerate mean-field games systems]
      {Stability of backward inverse problems for degenerate mean-field game systems}
\author{S. E. Chorfi}
\address{S. E. Chorfi, L. Maniar, Faculty of Sciences Semlalia, Cadi Ayyad University, B.P. 2390, Marrakesh, Morocco}
\email{s.chorfi@uca.ac.ma, maniar@uca.ac.ma}
\author{A. Habbal}
\address{A. Habbal, M. Jahid, L. Maniar, A. Ratnani,\\ The UM6P-Vanguard Center, University Mohammed VI Polytechnic, Benguerir, Morocco}
\email{Abderrahmane.Habbal@um6p.ma, Meryeme.JAHID@um6p.ma, Lahcen.Maniar@um6p.ma, Ahmed.Ratnani@um6p.ma}
\address{A. Habbal, University of Côte d’Azur, Inria, LJAD, Parc Valrose 06108 Nice, France}
\email{abderrahmane.habbal@univ-cotedazur.fr}
\author{M. Jahid}
\author{L. Maniar}
\author{A. Ratnani}
 \keywords{Degenerate system, mean-field games, backward problem, Carleman estimate, stability}
\subjclass[2020]{Primary: 35K65, 35Q89; Secondary: 35R30, 35R25}
\begin{document}
\dedicatory{\large Dedicated to the memory of Professor Hammadi Bouslous}
\begin{abstract}
We investigate inverse backward-in-time problems for a class of second-order degenerate Mean-Field Game (MFG) systems. More precisely, given the final datum $(u(\cdot, T),m(\cdot, T))$ of a solution to the one-dimensional mean-field game system with a degenerate diffusion coefficient, we aim to determine the intermediate states $(u(\cdot,t_{0}),m(\cdot,t_{0}))$ for any $t_{0} \in [0, T)$, i.e., the value function and the mean distribution at intermediate times, respectively. We prove conditional stability estimates under suitable assumptions on the diffusion coefficient and the initial state $(u(\cdot,0),m(\cdot,0))$. The proofs are based on Carleman's estimates with a simple weight function. We first prove a Carleman estimate for the Hamilton-Jacobi-Bellman (HJB) equation. A second Carleman estimate will be derived for the Fokker-Planck (FP) equation. Then, by combining the two estimates, we obtain a Carleman estimate for the mean-field game system, leading to the stability of the backward problems.
\end{abstract}

\maketitle

\section{Introduction and motivation}
In this paper, we are interested in the stability of backward inverse problems for nonlinear second-order MFG systems in the one-dimensional {\it degenerate} case. Degenerate MFG with possibly vanishing diffusion may indeed occur in many situations, where randomness is hindered at some stage, as e.g. for certain robotic or autonomous systems, particularly when robots or drones need to execute highly precise movements (e.g., in assembly lines or surgical robots). When precise control is required -for instance, at the boundary-, the robots' movements are largely deterministic with minimal randomness. The diffusion term representing random fluctuations becomes negligible, and the MFG describing the system dynamics degenerates, reflecting this precise control situation.
We can also find this type of equation in the field of water resources (water management) \cite{GRP}, as well as in the field of oil production \cite{OJP}.

Let $\Omega :=(0,1)$ be the domain where agents are located and let $T>0$ be a terminal time. Denote $Q:=(0,1) \times(0, T)$, $\Sigma:=\left\lbrace0,1\right\rbrace \times(0, T)$, and
consider the following degenerate MFG system with a quadratic Hamiltonian
\begin{empheq}[left = \empheqlbrace]{alignat=2}
\begin{aligned} \label{eq0}
& u_t(x,t) +a(x) u_{xx}(x,t)-\frac{p(x,t) }{2}|u_{x}|^2+   d(x,t) m = F(x,t), &&\quad \text { in } Q,\\
& m_{t}(x,t) -\left(a(x) m(x,t)\right)_{xx} -\left( p(x,t)mu_{x}\right)_{x}= G(x,t), &&\quad \text { in } Q,\\
&  u= am=0, &&\quad \text { on } \Sigma,\\
& m(\cdot,0)=m_0,\quad u(\cdot,T)=h, &&\quad \text { in } \Omega.
\end{aligned} 
\end{empheq}
Throughout the paper, we assume that the diffusion coefficient satisfies: $a\in C([0,1])\cap C ^1(0,1)$, $a(x)>0$ for $x\in \Omega$ and $a(0)=a(1)=0$, i.e., it degenerates at the boundary points $x=0$ and $x=1$. Additional assumptions will be discussed later.

We aim to determine the unknown intermediate state $(u(\cdot,t_{0}),m(\cdot,t_{0}))$, for every $t_{0}\in [0, T)$,  in the system \eqref{eq0} from the knowledge of the final datum $(u(\cdot,T),m(\cdot,T))$. It should be emphasized that the problem for $m(\cdot,t_{0})$ is backward in the opposed time direction (the FP equation is forward), while the problem for $u(\cdot,t_{0})$ goes in the same direction of time (the HJB equation is backward). Still, we call the original problem backward.
\smallskip

The Mean-Field Games theory is devoted to analyzing differential games with infinitely many agents trying to optimize their costs. The rigorous foundations of the mean-field game theory were initially proposed by Lasry and Lions \cite{LPL06, LPL061}, and Huang, Caines, and Malhamé \cite{HCM07, HCM06}. These pioneering works simplified the theoretical framework of games into a system of two PDEs: the HJB equation and the FP equation (see system \eqref{eq0}). The first equation is backward in time, while the latter is forward in time. The HJB equation describes the value function $u$ for a representative player, while the FP equation governs the distribution mean-field dynamics $m$ of all players. The terminal condition $u(\cdot, T)$ for the HJB equation represents the payoff for the representative player at the end of the game, while the initial condition $m(\cdot, 0)$ for the FP equation characterizes the initial distribution of all players. This allows for the solution of the infinite-player game based on a solution of the MFG systems. However, the presence of nonlinearities in the system and the second-order coupling leads to substantial difficulties. MFG theory has been applied to a wide range of models, including those related to economics and finance \cite{Tr21}, pedestrian crowds \cite{Ach20, LT11}, vehicle interactions \cite{Ma23}, public health (vaccination games) \cite{D22}, etc.

As a simplified interpretation of system \eqref{eq0}, we consider infinitely many agents whose movements are influenced by a combination of their control (strategy) $\alpha_t$ and a random noise modeled by a Brownian motion $B_t$. The  following stochastic differential equation describes the dynamics of each agent
\begin{equation}\label{sde}
dX_t= p(X_t,t)\alpha_t dt + \sigma(X_t)dB_t.    
\end{equation}
Each agent seeks to minimize the following cost function 
$$ J\left(\alpha\right)=\mathbb{E}\left[\int_{0}^T \mathcal{L}(X_t,m(X_t,t))+ \frac{p(X_t,t)}{2}\vert \alpha_t\vert^2 dt+ h(X_T)\right],$$
where $X_t:=X(t)$ is the state of an agent at time $t$, $h$ is the final cost, $\alpha_t:=\alpha(t)$ is a control, $p(x,t)$ can be interpreted as the elasticity of the medium, $\mathcal{L}$ is the running cost (the cost of interactions with other agents), and $\mathbb{E}$ denotes the mathematical expectation corresponding to the Brownian motion $B_t$.

It should be emphasized that degenerate mean-field game systems such as \eqref{eq0} find many applications, although not well considered. Indeed, the diffusion coefficient $\sigma$ in \eqref{sde} can vanish at some points, and so can $ a(x)= \frac{1}{2}\sigma^2(x)$. For instance, an interesting example is given by $a(x)=x^\beta (1-x)^\delta, \; \beta,\delta>0$. This model is related to the Wright-Fischer diffusion process in $[0,1]$ used for gene frequency models in population genetics, where $a(x)=x(1-x)$. See \cite[Examples, p.~48]{Ma68} and also \cite[Ch.~VIII, \S 8]{Sh92} for multidimensional models. We refer to \cite{OJP} for a model of oil production with $a(x)=\frac{\gamma^2}{2}x^2$, where $\gamma$ measures the intensity of the noise. A similar model for water management has recently been studied in \cite{GRP}. In the paper \cite{CGP15}, the authors analyzed {\it possibly degenerate} general second-order MFG systems with local coupling. The authors proved the existence and uniqueness of appropriately defined weak solutions.

During the last few years, inverse problems for MFG systems have been actively investigated. Recently in \cite{Kl0}, and for the first time, Klibanov and Averboukh introduced Carleman estimates (weighted energy estimates with large parameters \cite{Car39, Ya}) within the analysis of MFG systems. By developing new Carleman estimates, they have proven a Lipschitz stability estimate concerning errors in the initial and terminal input data. Then several works followed, and the field is rapidly growing. We refer to \cite{Mk} where a quasi-Carleman estimate is introduced to prove stability and uniqueness of an overdetermined MFG system. The authors in \cite{KL23} have proven a Hölder stability estimate for MFG systems with the lateral Cauchy data (Dirichlet and Neumann boundary data). In \cite{ILY23}, a unique continuation problem for MFG systems was considered. The authors of \cite{LY23} have proven Hölder and Lipschitz stability estimates for the states for a linearized MFG system. Furthermore, in \cite{ILY231}, the Lipschitz stability for an inverse source problem using boundary data has recently been proven. In \cite{IY23}, the authors have studied an inverse problem for a Hamiltonian coefficient. The recent paper \cite{KLL23} studied coefficient inverse problems for generalized MFG systems with final overdetermination. Some Hölder stability estimates are proven in the case of partial boundary data, and Lipschitz stability estimates are proven in the case of complete boundary data. These estimates imply, in particular, uniqueness results. Similar results have been proven in \cite{Kl23}. However, all the above papers dealt with the case of uniformly parabolic equations, i.e., the case where the diffusion coefficient is positive up to the boundary. For some related inverse problems of linear degenerate parabolic equations, we refer to \cite{Ka15}.

Our main purpose in this paper is to establish conditional stability estimates for backward-in-time problems in the degenerate case using Carleman estimates. We will establish three Carleman estimates: the first one for the Hamilton-Jacobi-Bellman equation and the second one for the Fokker-Planck equation. By combining the two Carleman estimates, we obtain a Carleman estimate for the MFG system. To our knowledge, there is no work on backward problems for degenerate MFG systems.

The paper is organized as follows. In Section \ref{sec2}, we prove a Carleman estimate for the linearized mean-field game system. In Section \ref{sec3}, we show the conditional stability of the backward problem for the linearized degenerate mean-field game system. In Section \ref{sec4}, we prove the conditional stability for the nonlinear degenerate mean-field game system. Finally, Section \ref{sec5} is devoted to some conclusions and perspectives.
\section{Carleman estimates for linearized MFG system}\label{sec2}
We consider the linearized degenerate mean-field game system
\begin{empheq}[left = \empheqlbrace]{alignat=2}
\begin{aligned}\label{eq1to15}
& u_t(x,t) +a(x) u_{xx}(x,t)+d_{1}(x,t) u_{x}=   d_{2}(x,t) m + F, &&\quad \text { in } Q, \\
& m_{t}(x,t) -\left(a(x) m(x,t)\right)_{xx} +c_{1}(x,t)m_{x}=b(x,t) m+ c_{2}(x,t)u_{x} + \rho(x,t) u_{xx} + G, &&\quad \text { in } Q, \\
& u= am=0, &&\quad \text { on } \Sigma\\
& m(\cdot,0)=m_0,\quad u(\cdot,T)=h, &&\quad \text { in } \Omega.
\end{aligned}
\end{empheq}
We assume the following assumptions: $b,c_{2},\rho \in L^{\infty}(Q)$ and there exists a constant $C>0$ such that for all $(x,t)\in Q$:
\begin{equation}
\vert d_{1}(x,t)\vert \leqslant C \sqrt{a(x)} ,\;\;\;\; \vert c_{1}(x,t)\vert \leqslant C \sqrt{a(x)} ,\;\;\;\; \vert a_x(x)\vert \leqslant C \sqrt{a(x)}, \quad \vert d_{2}(x,t)\vert \leqslant C a(x).
\end{equation}
In this section, we prove a Carleman estimate for the linearized MFG system \eqref{eq1to15}. To do this, we first develop a Carleman estimate for a degenerate Hamilton-Jacobi-Bellman equation. Then we derive a Carleman estimate for a degenerate Fokker-Planck equation. We start by presenting some results for linear degenerate equations which are of independent interest.

\subsection{Linear part of the HJB equation}
\subsubsection{Carleman estimate without drift term }
We consider the following backward degenerate parabolic equation in nondivergence form
\begin{empheq}[left = \empheqlbrace]{alignat=2}
\begin{aligned}\label{eq1to1}
& u_{t}(x,t) +a(x) u_{xx}(x,t)= F(x,t), &&\quad \text { in } Q,\\
& u=0, &&\quad \text{ on } \Sigma,\\
&u(x,T)=u_T, &&\quad \text{ in } \Omega.
\end{aligned}
\end{empheq}
To study \eqref{eq1to1}, let us consider the weighted Hilbert spaces 
$$L^2_{\frac{1}{a}}(0,1):=\left\lbrace u 
 \in L^2(0,1)\; |\;\|u\|_{\frac{1}{a}}<\infty \right\rbrace, \quad \|u\|_{\frac{1}{a}}^2:=\int_{0}^1u^2 \frac{1}{a} dx;
 $$
$$H^1_{\frac{1}{a}}(0,1):=L^2_{\frac{1}{a}}(0,1) \cap H^1_{0}(0,1), \quad \|u\|_{1,\frac{1}{a}}^2:= \|u\|_{\frac{1}{a}}^2+\| u_{x}\|_{L^2(0,1)}^2;$$
$$H^2_{\frac{1}{a}}(0,1):=\left\lbrace u 
 \in H^1_{\frac{1}{a}}(0,1)\; | \; a u_{xx}\in L^2_{\frac{1}{a}}(0,1)\right\rbrace, \quad \|u\|_{2,\frac{1}{a}}^2:= \|u\|_{1,\frac{1}{a}}^2+\int_{0}^1 au_{xx}^2  dx.$$
We adopt the same weight function as in \cite{CY23}:
$$ \varphi(t)=e^{\lambda t}, \;\; t>0,$$
where $\lambda>0$ is a sufficiently large parameter. Note that this simple weight function is $x$-independent and can be used in degenerate and nondegenerate cases.

Next, we state the Carleman estimate relevant to \eqref{eq1to1}.
\begin{lemma}
There exists a constant $\lambda_{0}>0$ such that for each $\lambda>\lambda_{0}$, there are constants $s_{0}(\lambda)>0$ and $C>0$ such that
\begin{equation}\label{Carlm HJB}
    \begin{aligned}
           &\int_{Q} \left(\frac{1}{a(x)}\vert u_{t}\vert^2+ a(x)\vert u_{xx}\vert^2+ s\lambda \varphi \vert u_{x}\vert^2+ \frac{s^2\lambda^2\varphi^2}{a(x)}\vert u\vert^2\right)e^{2s\varphi}dxdt\\&\leqslant C  \int_{Q}s\varphi\frac{1}{a(x)}\vert F\vert^2 e^{2s\varphi}dxdt +Cs\left( s\lambda\varphi(T)\| u(\cdot,T)\|_{\frac{1}{a}}^2+\| u(\cdot,T)\|_{1,\frac{1}{a}}^2 \right)e^{2s\varphi(T)}\\& +Cs\left( s\lambda\| u(\cdot,0)\|_{\frac{1}{a}}^2+\| u(\cdot,0)\|_{1,\frac{1}{a}}^2\right)e^{2s}
    \end{aligned}
\end{equation}
for all $s\geqslant s_{0}(\lambda)$ and $u \in H^2_{\frac{1}{a}}(0,1)$ solution of \eqref{eq1to1}.
\end{lemma}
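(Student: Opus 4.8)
The plan is to use the standard conjugation/splitting method adapted to the simple time-weight $\varphi(t)=e^{\lambda t}$. First I would set $w = e^{s\varphi} u$ and compute $P_s w := e^{s\varphi}(u_t + a u_{xx})$, which splits into a "symmetric" part $P^+ w = a w_{xx} + s^2\lambda^2\varphi^2 \frac{1}{?}\cdots$ — more precisely, since $\varphi$ is $x$-independent, the conjugated operator is $P_s w = a w_{xx} + w_t - s\lambda\varphi w$, and I group the first-order-in-$s$ terms. Writing $P_s w = e^{s\varphi}F =: f$, I isolate $P^- w := w_t$ and $P^+ w := a w_{xx} - s\lambda\varphi w$ (with the cross term $a w_{xx}$ kept with the symmetric part), so that $\|P^+ w\|^2 + \|P^- w\|^2 + 2\langle P^+ w, P^- w\rangle = \|f\|^2$, all inner products taken in $L^2_{1/a}(Q)$. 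The crux is to show the cross term $2\langle P^+ w, P^- w\rangle_{L^2_{1/a}(Q)}$ produces, after integration by parts in $x$ and $t$, the positive bulk terms $c\,s\lambda\int_Q \varphi |w_x|^2 \,dx\,dt + c\,s^2\lambda^2\int_Q \varphi^2 \frac{1}{a}|w|^2\,dx\,dt$ plus controllable boundary-in-time contributions at $t=0$ and $t=T$.

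The key computations in order: (i) expand $2\langle a w_{xx} - s\lambda\varphi w,\, w_t\rangle_{1/a}$; the term $2\int_Q w_{xx} w_t\,dx\,dt$ integrates by parts in $x$ (using $w=0$ on $\Sigma$, which holds since $u$ vanishes there) to give $-\frac{d}{dt}\int_0^1 |w_x|^2\,dx$, contributing only time-boundary terms $\mp\int_0^1 |w_x(\cdot,T)|^2 - |w_x(\cdot,0)|^2$; (ii) the term $-2s\lambda\int_Q \varphi w w_t \frac{1}{a}\,dx\,dt = -s\lambda\int_Q \varphi \partial_t(w^2)\frac{1}{a}\,dx\,dt$ integrates by parts in $t$, yielding $+s\lambda^2\int_Q \varphi w^2\frac{1}{a}\,dx\,dt$ (the positive bulk term of order $s\lambda^2$, since $\varphi_t=\lambda\varphi$) minus time-boundary terms $s\lambda[\varphi w^2/a]_0^T$. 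I then notice that to also get the $s^2\lambda^2\varphi^2/a\,|w|^2$ and $s\lambda\varphi |w_x|^2$ terms one must additionally exploit that $P^+w$ contains $a w_{xx}$: testing the equation $P^+w = f - P^-w$ against $s\lambda\varphi \frac{1}{a} w$ (or equivalently absorbing $aw_{xx}$ differently) gives $s\lambda\int_Q \varphi |w_x|^2\,dx\,dt$ after integration by parts and $s^2\lambda^2\int_Q \varphi^2\frac{1}{a}|w|^2\,dx\,dt$ from the zeroth-order term, again modulo time-boundary data. Choosing $\lambda$ large first absorbs lower-order $\lambda$-powers, then $s$ large absorbs the remaining cross terms, and finally undoing the substitution $w = e^{s\varphi}u$ (noting $u_x = e^{-s\varphi}(w_x - s\varphi_x w) = e^{-s\varphi}w_x$ because $\varphi_x=0$, and $u_t = e^{-s\varphi}(w_t - s\lambda\varphi w)$, so $\frac{1}{a}|u_t|^2 e^{2s\varphi} \lesssim \frac{1}{a}|w_t|^2 + s^2\lambda^2\varphi^2\frac{1}{a}|w|^2$) converts the bulk terms into the left-hand side of \eqref{Carlm HJB}, with the time-boundary terms at $t=0,T$ matching the stated weighted norms $\|u(\cdot,T)\|^2_{1,\frac1a}$, $s\lambda\varphi(T)\|u(\cdot,T)\|^2_{\frac1a}$, etc.

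The main obstacle I anticipate is the degeneracy at $x=0,1$: all the $x$-integrations by parts must be justified in the weighted spaces $H^2_{1/a}(0,1)$, and one must verify that boundary terms at $x=0,1$ actually vanish — this is where the Dirichlet condition $u=0$ on $\Sigma$ (hence $w=0$) together with the defining property $au_{xx}\in L^2_{1/a}$ is used, so that quantities like $a w_x w_{xx}$ or $w_x w_t$ have no boundary contribution. A secondary technical point is that the $\frac1a$ weight in the inner product is exactly tuned so that the degenerate second-order term $a w_{xx}$ paired against $\frac1a w$ produces a nondegenerate $|w_x|^2$, and paired against $\frac1a w_t$ produces the clean $|w_x|^2$ time-derivative; getting all constants to line up so that after fixing $\lambda$ and then $s$ every error term is strictly dominated is the routine-but-delicate bookkeeping. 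Since $\varphi$ is $x$-independent there are no "$\varphi_{xx}$" or "$a\varphi_x^2$" terms that plague the classical Fursikov–Imanuvilov weight, which is precisely why this simpler argument goes through in the degenerate setting; a density argument then extends the estimate from smooth solutions to all $u\in H^2_{1/a}(0,1)$ solving \eqref{eq1to1}.
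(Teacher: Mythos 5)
Your plan is correct and follows essentially the same route as the paper: conjugate with $w=e^{s\varphi}u$, split the conjugated operator into $P_1w=w_t$ and $P_2w=aw_{xx}-s\lambda\varphi w$, expand the square in $L^2_{1/a}(Q)$, integrate the cross terms by parts (producing only time-boundary contributions since $\varphi_x=0$), add a multiplier estimate to generate the $|w_x|^2$ and zeroth-order bulk terms, and finally undo the substitution. The only difference is one of bookkeeping: you propose the multiplier $s\lambda\varphi\tfrac{1}{a}w$ so as to reach the full weights $s\lambda\varphi|u_x|^2$ and $s^2\lambda^2\varphi^2 a^{-1}|u|^2$ in a single pass, whereas the paper first obtains the weaker weights $\lambda|w_x|^2$ and $s\lambda^2\varphi\, a^{-1}|w|^2$ using the multiplier $\lambda\tfrac{1}{a}w$ and then upgrades the powers by applying the resulting estimate to $v=\varphi^{1/2}u$ and multiplying through by $s$.
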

\begin{proof}
Set 
$$ Lu :=u_{t} + a(x)  u_{xx}, \quad  w=e^{s\varphi}u, \quad Pw=e^{s\varphi}L\left(e^{-s\varphi}w\right)= e^{s\varphi} F.
$$
Hence
   $$ e^{s\varphi} \left(e^{-s\varphi}w\right)_{t}=w_{t} -s\lambda \varphi w, $$
    $$ e^{s\varphi} a(x) \left(w e^{-s\varphi}\right)_{xx}=a(x)w_{xx},$$
    and 
    $$ Pw= e^{s\varphi}L\left(e^{-s\varphi}w\right)= P_{1}w+P_{2}w =e^{s\varphi} F,
    $$
   where 
   $$
   P_{1}w= w_{t}, \quad P_{2}w = -s\lambda \varphi w +a(x)w_{xx}.
   $$
By taking the norm $\|\cdot\|_{L^2((0,T),L^2_{\frac{1}{a}}(0,1))}$ in the previous equality, we obtain
\begin{align*}
\| e^{s\varphi}F\|^2_{L^2((0,T),L^2_{\frac{1}{a}}(0,1))}=&\int_{Q} \frac{1}{a(x)}\vert w_{t}\vert^2 dx dt + 2 \int_{Q} \frac{1}{a(x)}w_{t} \left(-s\lambda \varphi w +a(x)w_{xx} \right)\\
& +\int_{Q}\frac{1}{a(x)}\left\vert - s\lambda \varphi w +a(x)w_{xx} \right\vert^2dxdt\\
 \geqslant& \int_{Q}\frac{1}{a(x)} \vert w_{t}\vert^2dxdt + 2 \int_{Q}w_{t}w_{xx} dx dt+2 \int_{Q}\frac{1}{a(x)}w_{t}  (-s\lambda\varphi)wdxdt\\
:= &\int_{Q} \frac{1}{a(x)}\vert w_{t}\vert^2dxdt+ J_{1}+J_{2}.
\end{align*}
Thus 
$$ \int_{Q}\frac{1}{a(x)}\vert F\vert^2 e^{2s\varphi}dx dt\geqslant J_{1}+J_{2},$$
\begin{align}\label{3}
    \int_{Q}\frac{1}{a(x)} \vert w_{t}\vert^2dxdt \leqslant \int_{Q}\frac{1}{a(x)}\vert F\vert^2 e^{2s\varphi}dx dt -J_{1}-J_{2},
\end{align}
and 
\begin{align*}
    \int_{Q}\frac{1}{a(x)} \vert P_{2}w\vert^2dxdt \leqslant \int_{Q}\frac{1}{a(x)}\vert F\vert^2 e^{2s\varphi}dx dt -J_{1}-J_{2}.
\end{align*}
Take $s>1$ and $\lambda>1$, 
using the boundary conditions and integration by part, we obtain
\begin{equation}\label{4}
    \begin{aligned}
    J_{1}&=2 \int_{Q}w_{t} w_{xx} dx dt= -2\int_{Q}w_{xt}w_{x}dx dt=-\int_{Q}(w_{x}^2)_{t}dx dt\\
&= \int_{0}^1\left[\vert w_{x}(0,x)\vert^2- \vert w_{x}(T,x)\vert^2\right] dx\leqslant  \int_{0}^1\left[\vert w_{x}(T,x)\vert^2+\vert w_{x}(0,x)\vert^2\right] dx.
\end{aligned}
\end{equation}
On the other hand,
\begin{equation}\label{5}
    \begin{aligned}
    J_{2} &=-s\lambda \int_{Q} \frac{1}{a(x)}2w_{t} w\varphi\, dxdt= -s\lambda \int_{Q} \frac{1}{a(x)}\left(w^2\right)_{t}\varphi\, dxdt\\
     & =s\lambda \int_{Q} \frac{1}{a(x)}\varphi'(t)w^2\, dxdt-s\lambda \int_{0}^1\left[\frac{1}{a(x)}\varphi w^2\right]^{t=T}_{t=0} dx\\
     &= s\lambda^2 \int_{Q}\varphi \frac{1}{a(x)} w^2 \, dx dt-s\lambda \int_{0}^1\frac{1}{a(x)}\left(\varphi(T)\vert w(x,T)\vert^2-\vert w(x,0)\vert^2\right)dx.
\end{aligned}
\end{equation}
Then 
\begin{equation}\label{1}
\begin{aligned}
    \| e^{s\varphi}F\|^2_{L^2((0,T),L^2_{\frac{1}{a}}(0,1))} &\geqslant s\lambda^2 \int_{Q}\varphi\frac{1}{a(x)} w^2 \,dxdt  -s\lambda \int_{0}^1 \frac{1}{a(x)}\left(\varphi(T)\vert w(x,T)\vert^2+\vert w(x,0)\vert^2\right)dx\\& -\int_{0}^1\left( \vert w_{x}(x,T)\vert^2+\vert w_{x}(x,0)\vert^2\right)\, dx.
\end{aligned}
\end{equation}
Moreover,
\begin{align*}
    \int_{Q}\frac{1}{a(x)}\left(Pw\right)(-w)\, dxdt&=- \int_{Q} \frac{1}{a(x)}w_{t}w\, dx dt + \int_{Q}s\lambda \varphi\frac{1}{a(x)} w^2\, dx dt- \int_{Q} \frac{1}{a(x)} \left( a(x)w_{xx}\right)w \,dxdt\\
    &= -\int_{Q} \frac{1}{a(x)}w_{t}w\, dxdt +\int_{Q}s\lambda \varphi\frac{1}{a(x)} w^2\, dxdt- \int_{Q} w_{xx}w \,dxdt\\
    &  =I_{1}+I_{2}+I_{3}.
\end{align*}
\begin{align*}
   \left\vert I_{1}\right\vert &= \left\vert - \int_{Q}\frac{1}{a(x)}w_{t}w\, dxdt\right\vert = \left\vert -\frac{1}{2} \int_{Q}\frac{1}{a(x)}(w^2)_{t} \, dxdt\right\vert\\ & =\frac{1}{2}\left\vert -\int_{0}^1\frac{1}{a(x)}\left[ \vert w(x,t)\vert^2\right]_{t=0}^{t=T}\, dx\right\vert \leq \frac{1}{2}\int_{0}^1\frac{1}{a(x)}\left(\vert w(x,T)\vert^2+\vert w(x,0)\vert^2 \right)\, dx.
\end{align*}
Next,
$$ \vert I_{2}\vert = \left\vert \int_{Q} s\lambda \varphi \frac{1}{a(x)} w^2\, dxdt\right\vert \leq  \int_{Q} s\lambda \varphi \frac{1}{a(x)} w^2 \, dxdt,$$
and 
\begin{align*}
     I_{3}&= -\int_{Q}w_{xx}w dxdt= \int_{Q}\vert w_{x}\vert^2  dxdt.
\end{align*}
Hence
\begin{align*}
    \int_{Q}\lambda \frac{1}{a(x)}\left(Pw \right)(-w) dxdt &\geqslant   \int_{Q}\lambda \vert w_{x}\vert^2 dx dt- \int_{Q} s\lambda^2 \varphi \frac{1}{a(x)} w^2 dxdt\\& - \int_{0}^1\frac{\lambda}{2a(x)}\left(\vert w(x,T)\vert^2+\vert w(x,0)\vert^2 \right)\, dx.
\end{align*}
Moreover,
\begin{align*}
\left\vert \int_{Q} \frac{1}{a(x)} \lambda(Pw)(-w) dxdt\right\vert  & \leqslant \frac{1}{2}\int_{Q}\frac{1}{a(x)}\vert Pw\vert^2 dx dx dt+\frac{\lambda^2}{2}\int_{Q}\frac{1}{a(x)}\vert w\vert^2 dx dt\\ & =\frac{1}{2} \int_{Q} \frac{1}{a(x)}\vert F \vert^2 e^{2s\varphi} dxdt +\frac{\lambda^2}{2}\int_{Q}\frac{1}{a(x)}\vert w\vert^2 dx dt,
\end{align*}
and
\begin{equation*}\label{101}
   \begin{aligned}
    \lambda \int_{Q} \vert w_{x}\vert^2  dx dt \leqslant &\int_{Q}s\lambda^2\varphi \frac{1}{a(x)}w^2 dxdt  + \frac{1}{2} \int_{Q} \frac{1}{a(x)}\vert F \vert^2 e^{2s\varphi} dxdt \\&+\frac{\lambda^2}{2}\int_{Q}\frac{1}{a(x)}\vert w\vert^2 dx dt + \frac{\lambda}{2}\int_{0}^1\frac{1}{a(x)}\left(\vert w(x,T)\vert^2+\vert w(x,0)\vert^2 \right)\, dx.
\end{aligned} 
\end{equation*}
Using \eqref{1} to estimate the first term on the right-hand side, we have
\begin{equation}\label{2}
    \begin{aligned}
 & \lambda \int_{Q} \vert w_{x}\vert^2  dx dt\leqslant C_{3}\int_{Q} \frac{1}{a(x)}\vert F e^{s\varphi}\vert^2 dxdt +C_{3}\int_{Q}\lambda^2\frac{1}{a(x)} \vert w\vert^2 dxdt+C_{3}\lambda\left(\| w(\cdot,T)\|_{\frac{1}{a}}^2+\| w(\cdot,0)\|_{\frac{1}{a}}^2 \right)  \\ & +C_{3}s\lambda \left(\varphi(T)\| w(\cdot,T)\|_{\frac{1}{a}}^2+\| w(\cdot,0)\|_{\frac{1}{a}}^2 \right) +C_{3}\left( \| w_{x}(\cdot,T)\|_{L^2(0,1)}^2+\| w_{x}(\cdot,0)\|_{L^2(0,1)}^2\right)\\
 & \leqslant C_{3}\int_{Q}\frac{1}{a(x)}\vert F e^{s\varphi}\vert^2 dxdt + C_{3}\int_{Q}\lambda^2 \frac{1}{a(x)} w^2 dxdt +C_{3}s\lambda \left(\varphi(T)\| w(\cdot,T)\|_{\frac{1}{a}}^2+\| w(\cdot,0)\|_{\frac{1}{a}}^2 \right) \\ &+C_{3}\left( \| w_{x}(\cdot,T)\|_{L^2(0,1)}^2+\| w_{x}(\cdot,0)\|_{L^2(0,1)}^2\right).
\end{aligned}
\end{equation}
Adding \eqref{1} and \eqref{2}, we have 
\begin{align*}
    &\int_{Q} s\lambda^2 \varphi \frac{1}{a(x)} w^2 dxdt+\lambda \int_{Q} \vert w_{x}\vert^2  dx dt  \leqslant C_{4}  \int_{Q} \frac{1}{a(x)}\vert F e^{s\varphi}\vert^2 dxdt +C_{4}\int_{Q}\lambda^2  \frac{1}{a(x)} w^2 dxdt\\
    &+C_{4}s\lambda \left(\varphi(T)\| w(\cdot,T)\|_{\frac{1}{a}}^2+\| w(\cdot,0)\|_{\frac{1}{a}}^2 \right) +C_{4}\left( \| w_{x}(\cdot,T)\|_{L^2(0,1)}^2+\| w_{x}(\cdot,0)\|_{L^2(0,1)}^2\right).
\end{align*}
Given that $\varphi(t)=e^{\lambda t}\geqslant 1$, we choose a sufficiently large $s>0$ to absorb the second term on the right-hand side by the first term of the left-hand side. Hence,
\begin{equation}\label{6}
\begin{aligned}
    &\int_{Q} s\lambda^2 \varphi \frac{1}{a(x)}w^2 dxdt+\lambda \int_{Q}\vert w_{x}\vert^2  dx dt \leqslant C_{4}  \int_{Q}\frac{1}{a(x)}\vert F e^{s\varphi}\vert^2 dxdt \\&+C_{4}s\lambda \left(\varphi(T)\| w(\cdot,T)\|_{\frac{1}{a}}^2+\| w(\cdot,0)\|_{\frac{1}{a}}^2 \right) +C_{4}\left( \| w_{x}(\cdot,T)\|_{L^2(0,1)}^2+\| w_{x}(\cdot,0)\|_{L^2(0,1)}^2\right).
\end{aligned}
\end{equation}
We now focus on estimating $\vert w_{t}\vert^2$. Since we have $u=e^{-s\varphi}w,$ then
$$ u_{t}=-s\lambda \varphi e^{-s\varphi}w+e^{-s\varphi}w_{t}$$ and 
$$ \frac{1}{s\varphi a(x)}\vert u_{t}\vert^2e^{2s\varphi }\leqslant 2s\lambda^2 \varphi\frac{1}{a(x)} w^2+\frac{2}{s\varphi a(x)}\vert w_{t}\vert^2. $$
Let $\epsilon \in \left(0,\frac{1}{2}\right)$ be a parameter that we choose later. We note that $\frac{1}{s\varphi}= \frac{1}{se^{\lambda t}}\leq \frac{1}{s}\leq \frac{1}{2}$ for $s \geqslant 2.$ Hence, for all large $s>0$ and $\lambda> 0,$ it follows by \eqref{3} that
\begin{equation*}
    \begin{aligned}
        &\int_{Q}\frac{\epsilon}{s\varphi a(x)}\vert u_{t}\vert^2 e^{2s\varphi}\, dxdt\leqslant \int_{Q}2\epsilon s \lambda^2 \varphi \frac{1}{a(x)}w^2dxdt+\int_{Q}\frac{2\epsilon}{s\varphi a(x)}\vert w_{t}\vert^2dxdt\\ &\leqslant 2\epsilon \int_{Q}s\lambda^2\varphi \frac{1}{a(x)} w^2 dxdt+\epsilon \int_{Q}\frac{1}{a(x)}\vert w_{t}\vert^2dxdt\\ & \leqslant 2\epsilon \int_{Q} s\lambda^2\varphi \frac{1}{a(x)} w^2 dxdt + \epsilon \int_{Q} \frac{1}{a(x)} \vert F\vert^2 e^{2s\varphi}dxdt + \epsilon \left(-J_{1}-J_{2}\right).
    \end{aligned}
\end{equation*}
By applying \eqref{4} and \eqref{5}, we obtain
\begin{align*}
    \epsilon \left(-J_{1}-J_{2}\right)=& -s\epsilon\lambda^2 \int_{Q}\varphi \frac{1}{a(x)} w^2 \, dx dt +s\lambda\epsilon\int_{\Omega} \frac{1}{a(x)}\left(\varphi(T)\vert w(x,T)\vert^2-\vert w(x,0)\vert^2\right)dx\\
   &\,-\epsilon  \int_{0}^1\left(\vert w_{x}(x,0)\vert^2-\vert w_{x}(x,T)\vert^2\right)dx.
\end{align*}
Substituting this in \eqref{6} yields
\begin{equation}\label{7}
    \begin{aligned}
        &\int_{Q}\frac{\epsilon}{s\varphi a(x)}\vert u_{t}\vert^2 e^{2s\varphi}\, dxdt\leqslant \epsilon \int_{Q} s\lambda^2\varphi\frac{1}{a(x)} w^2 dxdt + \epsilon \int_{Q} \frac{1}{a(x)}\vert F\vert^2 e^{2s\varphi}dxdt \\ & +\int_{0}^1\epsilon\left(\vert  w_{x}(x,T)\vert^2+ \vert   w_{x}(x,0)\vert^2\right)dx +\epsilon s\lambda \int_{0}^1\frac{1}{a(x)}\left(\varphi(T)\vert w(x,T)\vert^2+\vert w(x,0)\vert^2\right)dx.
    \end{aligned}
\end{equation}
Adding \eqref{6} to \eqref{7} gives
\begin{equation}\label{8}
    \begin{aligned}
         &\int_{Q} s\lambda^2 \varphi \frac{1}{a(x)}w^2 dxdt+ \int_{Q} \lambda \vert w_{x}\vert^2 dx dt+\int_{Q}\frac{\epsilon}{s\varphi a(x)}\vert u_{t}\vert^2 e^{2s\varphi}\, dxdt\\& \leqslant C_{5}  \int_{Q}\frac{1}{a(x)}\vert F\vert^2 e^{2s\varphi}dxdt + 2\epsilon \int_{Q} s\lambda^2\varphi \frac{1}{a(x)}w^2 dxdt +C_{5}s\lambda \left(\varphi(T)\| w(\cdot,T)\|_{\frac{1}{a}}^2+\| w(\cdot,0)\|_{\frac{1}{a}}^2 \right)\\& +C_{5}\left( \| w_{x}(\cdot,T)\|_{L^2(0,1)}^2+\| w_{x}(\cdot,0)\|_{L^2(0,1)}^2\right).
    \end{aligned}
\end{equation}
Next,  we will estimate $ a(x) w_{xx}^2$. As,  $a(x) w_{xx}=P_{2}w+s\lambda \varphi w$,
\begin{align*}
    \frac{1}{s\varphi}a(x)\vert w_{xx}\vert^2 & \leqslant 2 \frac{1}{s\varphi a(x)}\vert P_{2}w\vert^2 +  2\frac{s \lambda^2 \varphi}{a(x)}\vert w\vert^2 \\ &
   \leqslant \frac{1}{a(x)} \vert P_{2}w \vert^2  + 2\frac{s \lambda^2 \varphi}{a(x)}\vert w\vert^2 .
    \end{align*}
     Therefore, for all large $s>0$ and $\lambda> 0, $ we have
    \begin{equation}\label{9}
        \begin{aligned}
            \int_{Q} \epsilon\frac{1}{s\varphi}a(x)\vert w_{xx}\vert^2 dx dt &\leqslant \int_{Q}\epsilon \frac{1}{a(x)} \vert F\vert^2 e^{2s\varphi}dxdt -\epsilon(J_{1}+J_{2}) + \int_{Q}2\epsilon  \frac{s \lambda^2 \varphi}{a(x)}\vert w\vert^2 dx dt\\&
            \leq \int_{Q}\epsilon \frac{1}{a(x)} \vert F\vert^2 e^{2s\varphi}dxdt +C\int_{0}^1\epsilon\left(\vert  w_{x}(x,T)\vert^2+ \vert   w_{x}(x,0)\vert^2\right)dx \\ &+\int_{Q}\epsilon  \frac{s \lambda^2 \varphi}{a(x)}\vert w\vert^2 dx dt +\epsilon s\lambda \int_{0}^1\frac{1}{a(x)}\left(\varphi(T)\vert w(x,T)\vert^2+\vert w(x,0)\vert^2\right)dx.
        \end{aligned}
    \end{equation}
Adding \eqref{9} to \eqref{8}, we obtain 
\begin{equation*}
    \begin{aligned}
          &\int_{Q} s\lambda^2 \varphi \frac{1}{a(x)}w^2 dxdt+ \int_{Q} \epsilon\frac{1}{s\varphi}a(x)\vert w_{xx}\vert^2 dx dt+\int_{Q} \lambda \vert w_{x}\vert^2 dx dt+\int_{Q}\frac{\epsilon}{s\varphi a(x)}\vert u_{t}\vert^2 e^{2s\varphi}\, dxdt\\& \leqslant C_{5}  \int_{Q}\frac{1}{a(x)}\vert F e^{s\varphi}\vert^2 dxdt + 4\epsilon \int_{Q} s\lambda^2\varphi \frac{1}{a(x)}w^2 dxdt +C_{5}s\lambda \left(\varphi(T)\| w(\cdot,T)\|_{\frac{1}{a}}^2+\| w(\cdot,0)\|_{\frac{1}{a}}^2 \right)\\& +C_{5}\left( \| w_{x}(\cdot,T)\|_{L^2(0,1)}^2+\| w_{x}(\cdot,0)\|_{L^2(0,1)}^2\right).
    \end{aligned}
\end{equation*}
Setting $v=\varphi^{\frac{1}{2}}u$, we obtain $ v_{t}=\frac{1}{2}\lambda \varphi^{\frac{1}{2}}u+\varphi^{\frac{1}{2}}u_{t}$ and
\begin{align*}
    v_{t}+a(x)v_{xx}=\varphi^{\frac{1}{2}}(u_{t}+a(x)u_{xx})+ \frac{1}{2}\lambda\varphi^{\frac{1}{2}} u= \varphi^{\frac{1}{2}} F +\frac{1}{2}\lambda\varphi^{\frac{1}{2}} u.
\end{align*}
Hence, we have 
\begin{equation}\label{14}
    \begin{aligned}
         & \int_{Q} \frac{1}{\varphi}\left(a(x)\vert v_{xx}\vert^2+\frac{1}{ a(x)}\vert v_{t}\vert^2 \right)e^{2s\varphi}dx dt+\int_{Q} \lambda s\vert v_{x}\vert^2 e^{2s\varphi} dx dt+\int_{Q} s^2\lambda^2 \varphi \frac{1}{a(x)}v^2 e^{2s\varphi} dxdt\\& \leqslant C  \int_{Q}s\varphi\frac{1}{a(x)}\vert F\vert^2e^{2s\varphi} dxdt +\frac{C}{2}\int_{Q}\frac{s\lambda^2\varphi}{a(x)}u^2 e^{2s\varphi}dxdt +C s\left( s\lambda\varphi(T)\| v(\cdot,T)\|_{\frac{1}{a}}^2+\| v(\cdot,T)\|_{1,\frac{1}{a}}^2 \right)e^{2s\varphi(T)}\\& 
         +Cs\left( s\lambda\| v(\cdot,0)\|_{\frac{1}{a}}^2+\| v(\cdot,0)\|_{1,\frac{1}{a}}^2\right)e^{2s}.
    \end{aligned}
\end{equation}
On the other hand,
\begin{equation*}
    \begin{aligned}
        &\varphi\vert u_{t}\vert^2 \leqslant 2\vert v_{t}\vert^2 +\lambda^2\varphi\vert u\vert^2,\qquad 
        \varphi \vert u_{xx}\vert^2 = \vert v_{xx}\vert^2.
    \end{aligned}
\end{equation*}
Hence, 
\begin{align*}
    &\frac{1}{a(x)}\vert u_{t}\vert^2+ a(x)\vert u_{xx}\vert^2\\&
    \leqslant \frac{1}{\varphi}\left( \frac{2}{a(x)}\vert v_{t}\vert^2 +a(x)\vert v_{xx}\vert^2\right)+ \frac{\lambda^2}{a(x)}\vert u \vert^2.
\end{align*}
Therefore,
\begin{align*}
    &\frac{1}{a(x)}\vert u_{t}\vert^2+ a(x)\vert u_{xx}\vert^2+ s\lambda \varphi \vert u_{x}\vert^2+ \frac{s^2\lambda^2\varphi^2}{a(x)}\vert u\vert^2\\&
    \leqslant \frac{1}{\varphi}\left( \frac{2}{a(x)}\vert v_{t}\vert^2 +a(x)\vert v_{xx}\vert^2\right)+ \frac{\lambda^2}{a(x)}\vert u \vert^2 + s\lambda\varphi \vert u_{x}\vert^2+ \frac{s^2\lambda^2\varphi^2}{a(x)}\vert u\vert^2 \\ &\leqslant C\frac{1}{\varphi}\left( \frac{1}{a(x)}\vert v_{t}\vert^2 +a(x)\vert v_{xx}\vert^2\right)+ s\lambda \vert v_{x}\vert^2+ \frac{s^2\lambda^2\varphi}{a(x)}\vert v\vert^2+ \frac{\lambda^2}{a(x)}\vert u \vert^2 . 
\end{align*}
Applying \eqref{14}, we obtain
\begin{equation*}
    \begin{aligned}
        &\int_{Q} \left(\frac{1}{a(x)}\vert u_{t}\vert^2+ a(x)\vert u_{xx}\vert^2+ s\lambda \varphi \vert u_{x}\vert^2+ \frac{s^2\lambda^2\varphi^2}{a(x)}\vert u\vert^2\right)e^{2s\varphi}dxdt
        \\&
        \leqslant C\int_{Q}\frac{1}{\varphi}\left( \frac{1}{a(x)}\vert v_{t}\vert^2 +a(x)\vert v_{xx}\vert^2\right)e^{2s\varphi}+\int_{Q}s\lambda \vert v_{x}\vert^2e^{2s\varphi}+\int_{Q} \frac{s^2\lambda^2\varphi}{a(x)}\vert v\vert^2 e^{2s\varphi}+ \int_{Q} \frac{\lambda^2}{a(x)}\vert u \vert^2 e^{2s\varphi} \\&\leqslant C  \int_{Q}s\varphi\frac{1}{a(x)}\vert F\vert^2 e^{2s\varphi}+C\frac{1}{2}\int_{Q}\frac{s\lambda^2\varphi}{a(x)}u^2 e^{2s\varphi} +C s\left( s\lambda\varphi(T)\| v(\cdot,T)\|_{\frac{1}{a}}^2+\| v(\cdot,T)\|_{1,\frac{1}{a}}^2 \right)e^{2s\varphi(T)}\\& +Cs\left( s\lambda\| v(\cdot,0)\|_{\frac{1}{a}}^2+\| v(\cdot,0)\|_{1,\frac{1}{a}}^2\right)e^{2s}+\int_{Q} \frac{\lambda^2}{a(x)}\vert u \vert^2 e^{2s\varphi} dxdt.
    \end{aligned}
\end{equation*}
For sufficiently large $s>0$, we can absorb the second term and the last term to the left-hand side, and we obtain
\begin{align*}
    &\int_{Q} \left(\frac{1}{a(x)}\vert u_{t}\vert^2+ a(x)\vert u_{xx}\vert^2+ s\lambda \varphi \vert u_{x}\vert^2+ \frac{s^2\lambda^2\varphi^2}{a(x)}\vert u\vert^2\right)e^{2s\varphi}dxdt\\&\leqslant C  \int_{Q}s\varphi\frac{1}{a(x)}\vert F\vert^2 e^{2s\varphi}dxdt +C s\left( s\lambda\varphi(T)\| v(\cdot,T)\|_{\frac{1}{a}}^2+\| v(\cdot,T)\|_{1,\frac{1}{a}}^2 \right)e^{2s\varphi(T)}\\& \quad +Cs\left( s\lambda\| v(\cdot,0)\|_{\frac{1}{a}}^2+\| v(\cdot,0)\|_{1,\frac{1}{a}}^2\right)e^{2s}.
\end{align*}
This achieves the proof. \end{proof}
\subsubsection{Carleman estimate with a drift term}
Let us consider the degenerate parabolic problem with a drift term
\begin{empheq}[left = \empheqlbrace]{alignat=2}
\begin{aligned}
& u_{t}(x,t) +a(x) u_{xx}(x,t)+b(x,t)u_x= F(x,t), &&\quad \text { in } Q,\\
& u=0, &&\quad \text{ on } \Sigma,\\
&u(\cdot,T)=u_T, &&\quad \text{ in } \Omega,
\label{eq1driftterm}
\end{aligned}
\end{empheq}
where $b\in L^{\infty}(Q)$. We assume the following assumption
\begin{align}\label{hypothese}
    \vert b(x,t)\vert \leqslant C \sqrt{a(x)}, \qquad (x,t) \in Q
\end{align}
for some constant $C>0$. See \cite{CY23} for a similar assumption in the divergence form case.
\begin{lemma}
There exists a constant $\lambda_{0}>0$ such that for each $\lambda>\lambda_{0}$ we can find a constant $s_{0}(\lambda)>0$ satisfying: there exists a constant $C>0$ such that
\begin{equation}\label{driftterm}
    \begin{aligned}
           &\int_{Q} \left(\frac{1}{a(x)}\vert u_{t}\vert^2+ a(x)\vert u_{xx}\vert^2+ s\lambda \varphi \vert u_{x}\vert^2+ \frac{s^2\lambda^2\varphi^2}{a(x)}\vert u\vert^2\right)e^{2s\varphi}dxdt\\&\leqslant C  \int_{Q}s\varphi\frac{1}{a(x)}\vert F\vert^2 e^{2s\varphi}dxdt +Cs\left( s\lambda\varphi(T)\| u(\cdot,T)\|_{\frac{1}{a}}^2+\| u(\cdot,T)\|_{1,\frac{1}{a}}^2 \right)e^{2s\varphi(T)}\\& +Cs\left( s\lambda\| u(\cdot,0)\|_{\frac{1}{a}}^2+\| u(\cdot,0)\|_{1,\frac{1}{a}}^2\right)e^{2s}
    \end{aligned}
\end{equation}
for all $s\geqslant s_{0}(\lambda)$ and $u \in H^2_{\frac{1}{a}}(0,1)$ solution of \eqref{eq1driftterm}.
\end{lemma}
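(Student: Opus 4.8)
The plan is to treat the drift term $b(x,t)u_x$ as a perturbation of the source and to reduce the estimate to the driftless Carleman estimate \eqref{Carlm HJB} just established. Concretely, I would set $\widetilde F := F - b(x,t)u_x$ and observe that any $u \in H^2_{\frac1a}(0,1)$ solving \eqref{eq1driftterm} also solves the driftless problem \eqref{eq1to1} with right-hand side $\widetilde F$ and the same boundary and terminal data. Applying the previous lemma to $u$ with source $\widetilde F$ then yields, for every $\lambda > \lambda_0$ and $s \geqslant s_0(\lambda)$, the full left-hand side of \eqref{driftterm} controlled by $C\int_Q s\varphi \frac{1}{a(x)}|\widetilde F|^2 e^{2s\varphi}\,dxdt$ together with the two terminal/initial data terms, which are already exactly of the form appearing in \eqref{driftterm}.

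Next I would expand $|\widetilde F|^2 \leqslant 2|F|^2 + 2|b|^2|u_x|^2$ and invoke the structural hypothesis \eqref{hypothese}, i.e. $|b(x,t)|^2 \leqslant C^2 a(x)$. This is precisely the assumption that neutralizes the degeneracy: the weight $\frac{1}{a(x)}$ multiplying the source term in \eqref{Carlm HJB} is cancelled by $|b|^2$, so that
$$ \int_Q s\varphi \frac{1}{a(x)}|b|^2|u_x|^2 e^{2s\varphi}\,dxdt \leqslant C^2 \int_Q s\varphi |u_x|^2 e^{2s\varphi}\,dxdt . $$
The integrand on the right here is, up to the fixed constant $C^2$, of the same form as the first-order term $s\lambda\varphi|u_x|^2$ occurring on the left-hand side of the driftless estimate, but with $\lambda$ replaced by $1$.

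The final step is the absorption argument: enlarging $\lambda_0$ if necessary so that the product of the Carleman constant $C$ from \eqref{Carlm HJB} and the constant $C^2$ from \eqref{hypothese} is at most $\lambda/2$ for all $\lambda > \lambda_0$, the perturbation integral is absorbed by $\int_Q s\lambda\varphi|u_x|^2 e^{2s\varphi}\,dxdt$ on the left-hand side; relabelling constants then produces \eqref{driftterm}. I do not expect a genuine obstacle here — the only point requiring care is that the absorption must be effected by choosing $\lambda$ large, not $s$ large, since $s\varphi|u_x|^2$ and $s\lambda\varphi|u_x|^2$ differ only by the factor $\lambda$, so enlarging $s$ would not help. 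Everything else, including the data terms and the admissible range $s \geqslant s_0(\lambda)$ (with a possibly updated $\lambda_0$), is inherited unchanged from the driftless lemma.
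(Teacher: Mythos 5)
Your proposal is correct and follows essentially the same route as the paper: absorb the drift into the source as $\widetilde F = F - b u_x$, apply the driftless Carleman estimate \eqref{Carlm HJB}, use $|b|^2\leqslant C^2 a$ to turn the perturbation into $\int_Q s\varphi|u_x|^2e^{2s\varphi}\,dxdt$, and absorb it into the $s\lambda\varphi|u_x|^2$ term by enlarging $\lambda$. Your explicit remark that the absorption must come from $\lambda$ rather than $s$ is exactly the point the paper relies on (implicitly), so nothing is missing.
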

\begin{proof}
Set $\Tilde{F}=F-b(x,t)u_x$. As a consequence of Lemma \ref{Carlm HJB}, we obtain
\begin{equation*}
    \begin{aligned}
           &\int_{Q} \left(\frac{1}{a(x)}\vert u_{t}\vert^2+ a(x)\vert u_{xx}\vert^2+ s\lambda \varphi \vert u_{x}\vert^2+ \frac{s^2\lambda^2\varphi^2}{a(x)}\vert u\vert^2\right)e^{2s\varphi}dxdt\\&\leqslant C  \int_{Q}s\varphi\frac{1}{a(x)}\vert \Tilde{F}\vert^2e^{2s\varphi} dxdt +Cs\left( s\lambda\varphi(T)\| u(\cdot,T)\|_{\frac{1}{a}}^2+\| u(\cdot,T)\|_{1,\frac{1}{a}}^2 \right)e^{2s\varphi(T)}\\& +Cs\left( s\lambda\| u(\cdot,0)\|_{\frac{1}{a}}^2+\| u(\cdot,0)\|_{1,\frac{1}{a}}^2\right)e^{2s}\\&\leqslant C  \int_{Q}s\varphi\frac{1}{a(x)}\vert F-b(x,t)u_{x}\vert^2e^{2s\varphi} dxdt +Cs\left( s\lambda\varphi(T)\| u(\cdot,T)\|_{\frac{1}{a}}^2+\| u(\cdot,T)\|_{1,\frac{1}{a}}^2 \right)e^{2s\varphi(T)}\\& +Cs\left( s\lambda\| u(\cdot,0)\|_{\frac{1}{a}}^2+\| u(\cdot,0)\|_{1,\frac{1}{a}}^2\right)e^{2s}\\&\leqslant C  \int_{Q}s\varphi\frac{1}{a(x)}\vert F\vert^2 e^{2s\varphi}dxdt +\int_{Q}s\varphi\frac{b^2(x,t)}{a(x)}\vert u_x \vert^2e^{2s\varphi} dxdt+Cs\left( s\lambda\varphi(T)\| u(\cdot,T)\|_{\frac{1}{a}}^2+\| u(\cdot,T)\|_{1,\frac{1}{a}}^2 \right)e^{2s\varphi(T)}\\& +Cs\left( s\lambda\| u(\cdot,0)\|_{\frac{1}{a}}^2+\| u(\cdot,0)\|_{1,\frac{1}{a}}^2\right)e^{2s},
    \end{aligned}
\end{equation*}
where we have used \eqref{hypothese} and chosen $\lambda >0$ large enough to absorb the second term on the right-hand side.
\end{proof}
\subsection{Linear part of the FP equation} In this section, we will prove a Carleman estimate for the linear part of the Fokker-Planck equation. Note that the literature for such equations is scarce in the degenerate framework. We are only aware of \cite[Example 3.8]{FY98}, where such an equation is briefly discussed on the unweighted space $L^2(\Omega)$.
\subsubsection{Well-posedness}
Let us consider the degenerate parabolic equation
\begin{empheq}[left = \empheqlbrace]{alignat=2}
\begin{aligned}
& m_{t}(x,t) -(a(x) m(x,t))_{xx}= G(x,t), && \quad \text{ in } Q,\\
& am=0, &&\quad \text { on } \Sigma,\\
&m(\cdot,0)=m_0, &&\quad \text{ in } \Omega.
\label{eq1to42}
\end{aligned}
\end{empheq}
We start by studying the well-posedness of \eqref{eq1to42}. Let us consider the following real Hilbert spaces with corresponding norms:
$$L^2_{a}(0,1):= \left\lbrace u \; \text{measurable function}\;|\; \| u\|_{a}< \infty \right\rbrace, \quad \| u\|_{a}^2:=\int_{0}^{1}au^2 dx;$$
$$ H_{1,a}(0,1):=\left\lbrace u \in L^2_{a}(0,1)\;|\; \int_{0}^1\vert (au)_{x}\vert^2 dx< \infty ,\;\; (au)(0)=(au)(1)=0\right\rbrace,$$
$$ \| u\|_{1,a}^2:=\| u\|_{a}^2+\int_{0}^1\vert (au)_{x}\vert^2 dx;$$
$$ H_{2,a}(0,1):=\left\lbrace u \in H_{1,a}(0,1)\;|\; (au)_{xx}\in L^2_{a}(0,1) \right\rbrace,$$
$$\| u\|_{2,a}^2:=\| u\|_{1,a}^2+\int_{0}^1 a\vert (au)_{xx}\vert^2 dx.$$
Then we show the following generation result.
\begin{theorem}
The operator $\left(\Tilde{A},D(\Tilde{A})\right)$ given by 
    $$\Tilde{A} u:=(au)_{xx},\ \qquad D(\Tilde{A})=H_{2,a}(0,1),$$
generates an analytic $C_0$-semigroup on $L^2_a(0,1)$.
\end{theorem}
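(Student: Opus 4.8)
The plan is to realize $\tilde A$ (up to sign) as the operator associated with a closed, symmetric, densely defined, nonnegative sesquilinear form, which by the classical variational theory of Lions automatically yields an analytic $C_0$-semigroup. Take the pivot space $H:=L^2_a(0,1)$ with inner product $\langle f,g\rangle_a=\int_0^1 afg\,dx$, the form domain $V:=H_{1,a}(0,1)$ with the norm $\|\cdot\|_{1,a}$, and define
$$\mathfrak a(m,n):=\int_0^1 (am)_x(an)_x\,dx, \qquad m,n\in V.$$
The operator $\tilde A$ will turn out to be $-\mathcal A$, where $\mathcal A$ is the self-adjoint operator associated with $\mathfrak a$.

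\textbf{Verification of the structural hypotheses.} The embedding $V\hookrightarrow H$ is continuous, since $\|m\|_a\le\|m\|_{1,a}$, and dense: $C_c^\infty(0,1)\subset V$ because $a\phi\in C_c^\infty(0,1)$ whenever $\phi\in C_c^\infty(0,1)$, and $C_c^\infty(0,1)$ is dense in $L^2_a(0,1)$ since $a$ is continuous and strictly positive on every compact subinterval (truncate to $[\delta,1-\delta]$, controlling the tails by dominated convergence, then mollify). The form $\mathfrak a$ is symmetric, continuous on $V\times V$ by Cauchy--Schwarz, nonnegative, and satisfies the coercivity identity $\mathfrak a(m,m)+\|m\|_H^2=\|m\|_V^2$. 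The only point requiring a short argument is the closedness of $\mathfrak a$, i.e. the completeness of $V$ in $\|\cdot\|_{1,a}$: if $(m_k)$ is Cauchy, then $m_k\to m$ in $L^2_a$ and, using $\|am_k-am\|_{L^2(0,1)}\le\|a\|_{L^\infty(0,1)}^{1/2}\|m_k-m\|_a$ together with $(am_k)_x\to g$ in $L^2(0,1)$, one gets $am\in H^1(0,1)$ with $(am)_x=g$ and $(am)(0)=(am)(1)=0$ (traces pass to the limit by $H^1$-convergence), so $m\in V$ and $m_k\to m$ in $V$. By the representation theorem for closed symmetric densely defined nonnegative forms, $\mathfrak a$ is associated with a unique self-adjoint operator $\mathcal A\ge 0$ on $H$, and $-\mathcal A$ generates a bounded analytic $C_0$-semigroup of contractions (of angle $\pi/2$) on $H$.

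\textbf{Identification $\mathcal A=-\tilde A$ with $D(\mathcal A)=H_{2,a}(0,1)$.} For $m\in D(\mathcal A)$, testing the identity $\mathfrak a(m,\phi)=\langle\mathcal A m,\phi\rangle_a$ against $\phi\in C_c^\infty(0,1)$ and using $a>0$ on $(0,1)$ shows $(am)_{xx}=-\mathcal A m$ in $\mathcal D'(0,1)$; since $\mathcal A m\in L^2_a(0,1)$, this gives $m\in H_{2,a}(0,1)$ and $\mathcal A m=-(am)_{xx}$. Conversely, for $m\in H_{2,a}(0,1)\subset V$ and $n\in V$, integrate by parts on $(\delta,1-\delta)$: the interior integrals $\int (am)_x(an)_x$ and $\int (am)_{xx}(an)$ converge absolutely (the latter because $|(am)_{xx}\,an|=(\sqrt a\,|(am)_{xx}|)(\sqrt a\,|n|)\in L^1(0,1)$ by Cauchy--Schwarz in $L^2_a$), while the boundary terms $(am)_x(an)$ evaluated at $x=\delta$ and $x=1-\delta$ can be sent to $0$ along a sequence $\delta_k\downarrow 0$ with $\delta_k|(am)_x(\delta_k)|^2\to 0$ — such a sequence exists since $(am)_x\in L^2(0,1)$ — combined with $|(an)(\delta)|\le\sqrt\delta\,\|(an)_x\|_{L^2(0,1)}$, valid because $an\in H^1_0(0,1)$. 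Hence $\mathfrak a(m,n)=-\langle(am)_{xx},n\rangle_a$ for all $n\in V$, so $m\in D(\mathcal A)$ with $\mathcal A m=-(am)_{xx}$, completing the identification; consequently $\tilde A=-\mathcal A$ generates an analytic $C_0$-semigroup on $L^2_a(0,1)$.

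\textbf{Main obstacle and an alternative.} The delicate step is this last identification: because $a$ vanishes at $x=0,1$, the weighted integration by parts is not licensed directly (the second derivative belongs only to $L^2_a(0,1)$, not $L^2(0,1)$, and $(am)_x$ need not possess boundary traces), so the boundary contributions must be discarded through the limiting argument above. An equivalent route worth recording is to transport everything by the isometry $U:L^2_a(0,1)\to L^2_{\frac1a}(0,1)$, $Um=am$, which carries $H_{2,a}(0,1)$ onto $H^2_{\frac1a}(0,1)$ and conjugates $\tilde A$ into the non-divergence operator $v\mapsto a v_{xx}$ on $L^2_{\frac1a}(0,1)$, thereby reducing the statement to the generation property of the HJB-type degenerate operator underlying \eqref{eq1to1}.
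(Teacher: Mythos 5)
Your proof is correct, and it takes a genuinely different route from the paper. The paper's argument is a two-line similarity transformation: it observes that $T:u\mapsto au$ is an isometric isomorphism from $L^2_a(0,1)$ onto $L^2_{\frac{1}{a}}(0,1)$ carrying $H_{2,a}(0,1)$ onto $H^2_{\frac{1}{a}}(0,1)$, notes that $\Tilde{A}=T^{-1}AT$ with $Au=au_{xx}$, and invokes the known generation theorem for $A$ on $L^2_{\frac{1}{a}}(0,1)$ from Cannarsa--Fragnelli--Rocchetti (Theorem 2.3 in \cite{CFR08}); this is exactly the ``alternative route'' you record in your final paragraph. Your main argument instead builds the semigroup from scratch via the symmetric form $\int_0^1(am)_x(an)_x\,dx$ on $H_{1,a}(0,1)$. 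What your route buys is self-containedness (no external citation needed) plus extra structural information for free: self-adjointness, nonnegativity, contractivity, and analyticity of angle $\pi/2$. What it costs is the domain identification $D(\mathcal A)=H_{2,a}(0,1)$, which is indeed the delicate step; your treatment of it is sound — the interior regularity $(am)_{xx}\in L^2_{\mathrm{loc}}(0,1)$ (from $a\geqslant c_\delta>0$ on compact subintervals) makes $(am)_x$ continuous on $(0,1)$ so the boundary terms are well defined, and your choice of a sequence $\delta_k$ with $\delta_k|(am)_x(\delta_k)|^2\to 0$ combined with the bound $|(an)(\delta)|\leqslant\sqrt{\delta}\,\|(an)_x\|_{L^2(0,1)}$ correctly kills them. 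The paper's proof is shorter but rests entirely on the cited result; either is acceptable, and yours would also serve readers who want the dissipativity structure of the Fokker--Planck operator made explicit.
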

\begin{proof}
We recall the following operator
$$A u=au_{xx},\qquad D(A)=H_{\frac{1}{a}}^2(0,1).$$
The operator $A$ generates an analytic $C_0$-semigroup $(S(t))_{t\geqslant 0}$ on $L^2_{\frac{1}{a}}(0,1)$ (Theorem 2.3 in \cite{CFR08}). Furthermore, $T:u \mapsto a u$ is an isomorphism from $D(\Tilde{A})=H_{2,a}(0,1) $ onto $D(A)=H_{\frac{1}{a}}^2(0,1),$
and $(\Tilde{S}(t))_{t\geqslant 0}$ defined by $\Tilde{S}(t):=T^{-1}S(t)T$ is an analytic $C_0$-semigroup on $L^2_{a}(0,1) $. Its infinitesimal generator is $T^{-1}AT $ (see \cite{AA}). On the other hand,
\begin{align*}
    T^{-1}A Tu &= T^{-1}A(au)\\&
    =T^{-1}(a(au)_{xx})=(au)_{xx}=\Tilde{A}u.
\end{align*}
Finally, $\Tilde{A}$ is the infinitesimal generator of the analytic $C_0$-semigroup $(\Tilde{S}(t))_{t\geqslant 0}$.
\end{proof}
\subsubsection{Carleman estimate without drift term}
In this section, we prove the following estimate using the Carleman estimate from \cite[Lemma 2.4]{PAPER}.
\begin{lemma}
There exists a constant $\lambda_{0}>0$ such that for each $\lambda>\lambda_{0},$ we can find a constant $s_{0}(\lambda)>0$ so that there exists a constant $C>0$ such that
\begin{equation*}
    \begin{aligned}
         & \int_{Q} \frac{1}{s\varphi}\left(a(x)\vert (am)_{xx}\vert^2+a(x)\vert m_{t}\vert^2 \right)e^{2s\varphi}dx dt+\int_{Q} \lambda \vert (am)_{x}\vert^2 e^{2s\varphi} dx dt+\int_{Q} s\lambda^2 \varphi a(x)\vert m\vert^2 e^{2s\varphi} dxdt\\\leqslant &C \int_{Q}a(x)\vert G\vert^2 e^{2s\varphi}dxdt +C \left(s\lambda\varphi(T)\| m(\cdot,T)\|_{a}^2+\| (am)_{x}(\cdot,T)\|_{2}^2\right)e^{2s\varphi(T)}\\& +C\left( s\lambda\| m(\cdot,0)\|_{a}^2 +\| (am)_{x}(\cdot,0)\|_{2}^2\right)e^{2s}
    \end{aligned}
\end{equation*}
for all $s\geqslant s_{0}(\lambda)$ and $m \in H_{2,a}(0,1)$ satisfying \eqref{eq1to42}.
\end{lemma}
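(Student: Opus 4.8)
The plan is to remove the divergence structure by setting $z := am$ and reducing to a degenerate parabolic equation in nondivergence form. Since $m \in H_{2,a}(0,1)$ solves \eqref{eq1to42}, the map $T\colon u \mapsto au$, which is an isomorphism of $H_{2,a}(0,1)$ onto $H^2_{\frac{1}{a}}(0,1)$ (see the proof of the preceding theorem), gives $z \in H^2_{\frac{1}{a}}(0,1)$; the boundary condition $am = 0$ on $\Sigma$ becomes $z = 0$ on $\Sigma$, and $z(\cdot,0) = am_0$. From $m_t - (am)_{xx} = G$ one gets $m_t = z_{xx} + G$, and since $a$ does not depend on $t$, $z_t = (am)_t = a m_t$; hence $z$ solves the \emph{forward} degenerate parabolic problem
$$ z_t - a(x)\, z_{xx} = a(x)\, G \quad \text{ in } Q, \qquad z = 0 \quad \text{ on } \Sigma, \qquad z(\cdot,0) = a m_0 .$$

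Next I would apply to $z$ the Carleman estimate for the forward degenerate equation with homogeneous Dirichlet data, namely \cite[Lemma 2.4]{PAPER}, with source $f = aG$ and the same weight $\varphi(t) = e^{\lambda t}$. This estimate is the time-mirrored counterpart of Lemma \ref{Carlm HJB}: conjugating the forward operator $\partial_t - a\,\partial_{xx}$ by $e^{s\varphi}$ yields the same favorable zero-order term $s\lambda^2\int_Q \frac{\varphi}{a}\,w^2$ (because $\varphi' = \lambda\varphi > 0$), while the homogeneous Dirichlet condition $z|_\Sigma = 0$ kills every boundary contribution in the $x$-integrations by parts; its proof thus repeats that of Lemma \ref{Carlm HJB} up to the intermediate estimate obtained there by adding \eqref{8} and \eqref{9}. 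Applied to $z$, it gives, for $\lambda$ large and all $s \geqslant s_0(\lambda)$,
\begin{align*}
 & \int_Q \frac{1}{s\varphi}\Big( a\,|z_{xx}|^2 + \frac{1}{a}\,|z_t|^2 \Big) e^{2s\varphi}\,dx\,dt + \int_Q \lambda\, |z_x|^2 e^{2s\varphi}\,dx\,dt + \int_Q s\lambda^2\varphi\, \frac{1}{a}\, |z|^2 e^{2s\varphi}\,dx\,dt \\
 &\leqslant C \int_Q \frac{1}{a}\,|aG|^2 e^{2s\varphi}\,dx\,dt + C\Big( s\lambda\varphi(T)\,\|z(\cdot,T)\|_{\frac{1}{a}}^2 + \|z_x(\cdot,T)\|_{L^2(0,1)}^2 \Big) e^{2s\varphi(T)} \\
 & \qquad + C\Big( s\lambda\,\|z(\cdot,0)\|_{\frac{1}{a}}^2 + \|z_x(\cdot,0)\|_{L^2(0,1)}^2 \Big) e^{2s} .
\end{align*}
Undoing the substitution, the pointwise identities $\frac{1}{a}|z_t|^2 = a|m_t|^2$ (recall $z_t = a m_t$), $a|z_{xx}|^2 = a|(am)_{xx}|^2$, $z_x = (am)_x$, $\frac{1}{a}|z|^2 = a|m|^2$ and $\frac{1}{a}|aG|^2 = a|G|^2$, together with $\|z(\cdot,t)\|_{\frac{1}{a}}^2 = \int_0^1 \frac{1}{a}(am)^2\,dx = \int_0^1 a m^2\,dx = \|m(\cdot,t)\|_a^2$ and $\|z_x(\cdot,t)\|_{L^2(0,1)} = \|(am)_x(\cdot,t)\|_2$, turn this inequality, term by term, into exactly the claimed estimate.

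The main difficulty is not computational but structural: one must make sure that the change of unknown $z = am$ is legitimate in the degenerate weighted scale, i.e. that $z$ inherits the regularity $H^2_{\frac{1}{a}}(0,1)$ and the trace $z|_\Sigma = 0$ needed to invoke the nondivergence Carleman estimate — which is exactly why the spaces $H_{k,a}(0,1)$ and the isomorphism $T$ were introduced, and why the boundary condition for $m$ is written $am = 0$ rather than $m = 0$. A second point worth stressing is that the forward estimate cannot be recovered from Lemma \ref{Carlm HJB} by merely reversing time: the weight $\varphi(t) = e^{\lambda t}$ must be \emph{increasing} so that the conjugated first-order-in-time term carries the correct sign, and preserving this property under $t \mapsto T - t$ would force replacing $\varphi$ by $e^{\lambda(T-t)}$, which interchanges the two endpoint data terms and gives a different inequality; hence the forward Carleman estimate has to be established on its own, albeit by an argument identical in structure to that of Lemma \ref{Carlm HJB}. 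Reconciling the weighted endpoint norms $\|\cdot\|_a$ with $\|\cdot\|_{\frac{1}{a}}$ is then routine bookkeeping.
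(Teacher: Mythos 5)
Your proposal is correct and follows exactly the paper's own route: the substitution $v=am$ reduces \eqref{eq1to42} to the forward nondivergence-form equation $v_t - a v_{xx} = aG$ with $v=0$ on $\Sigma$, to which \cite[Lemma 2.4]{PAPER} is applied, and the term-by-term translation back to $m$ is the same bookkeeping. Your additional observations — that the change of unknown is legitimized by the isomorphism $u\mapsto au$ between $H_{2,a}(0,1)$ and $H^2_{\frac{1}{a}}(0,1)$, and that the forward estimate cannot be obtained from Lemma \ref{Carlm HJB} by the reflection $t\mapsto T-t$ since $\varphi(t)\neq\varphi(T-t)$ — match, respectively, the paper's well-posedness theorem and the remark immediately following the lemma.
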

\begin{proof}
Let us consider the change of variable $v=am$, then the equation \eqref{eq1to42} is equivalent to the following one 
\begin{empheq}[left = \empheqlbrace]{alignat=2}
\begin{aligned}
& v_{t}(x,t) - a(x) v_{xx}(x,t)= a(x)G=:F, &&\quad \text{ in } Q,\\
& v=0, &&\quad \text { on } \Sigma.
\label{eq1to43}
\end{aligned}
\end{empheq}
We obtain the desired result by applying \cite[Lemma 2.4]{PAPER}.
\end{proof}
\begin{remark}
Usually, we use the change of variable $ t\mapsto T-t$ to derive a Carleman estimate for a forward equation from a backward equation. However, in our case, we cannot obtain a Carleman estimate for equation \eqref{eq1to43} directly from the Carleman estimate of the backward degenerate equation \eqref{eq1to1} because $\varphi(t)\neq \varphi(T-t)$.
\end{remark}
\subsubsection{Carleman estimate  with a drift term}
Let us consider the degenerate parabolic problem
\begin{empheq}[left = \empheqlbrace]{alignat=2}
\begin{aligned}
& m_{t}(x,t) -( a(x) m(x,t))_{xx} + b(x,t)m_{x}= G(x,t), && \quad \text{ in } Q, \\
& am=0, &&\quad \text { on } \Sigma,
\label{eq1to3}
\end{aligned}
\end{empheq}
where $b\in L^{\infty}(Q)$. Henceforth, we assume the following assumptions
$$ \vert b(x,t)\vert \leqslant C \sqrt{a(x)} ,\qquad \vert a_x(x)\vert \leqslant C \sqrt{a(x)}.$$
Note that imposing the second assumption on $a_x(x)$ for $a(x)=x^\beta(1-x)^\beta$ restricts the exponents to $\beta\geqslant 2.$
\begin{lemma}
    There exists a constant $\lambda_{0}>0$ such that for each $\lambda>\lambda_{0},$ we can find a constant $s_{0}(\lambda)>0$ so that there exists a constant $C>0$ such that
\begin{equation}\label{Carlm FP}
    \begin{aligned}
         & \int_{Q} \frac{1}{s\varphi}\left(a(x)\vert (am)_{xx}\vert^2+a(x)\vert m_{t}\vert^2 \right)e^{2s\varphi}dx dt+\int_{Q} \lambda \vert (am)_{x}\vert^2 e^{2s\varphi} dx dt+\int_{Q} s\lambda^2 \varphi a(x)\vert m\vert^2 e^{2s\varphi} dxdt\\& \leqslant C \int_{Q}a(x)\vert G\vert^2 e^{2s\varphi}dxdt +Cs\lambda \left(\varphi(T)\| m(\cdot,T)\|_{a}^2+\| (am)_{x}(\cdot,T)\|_{2}^2\right)e^{2s\varphi(T)}\\& +C\left( s\lambda\| m(\cdot,0)\|_{a}^2 +\| (am)_{x}(\cdot,0)\|_{2}^2\right)e^{2s}
    \end{aligned}
\end{equation}
for all $s\geqslant s_{0}(\lambda)$ and $m\in H_{2,a}(0,1)$ satisfying \eqref{eq1to3}.
\end{lemma}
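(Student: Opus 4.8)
The plan is to mimic the treatment of the drift term carried out for the HJB equation: absorb $b(x,t)m_x$ into the right-hand side and invoke the drift-free Fokker--Planck Carleman estimate of the preceding lemma. Rewriting \eqref{eq1to3} as
\[
m_t - (a(x)m)_{xx} = G - b(x,t)m_x =: \widetilde{G},
\]
I note that the term $b\,m_x$ introduces no boundary data, so the preceding lemma applies with $m$ and source $\widetilde G$ and gives, for $\lambda$ larger than some $\lambda_0$ and $s$ larger than some $s_0(\lambda)$,
\[
\mathcal{I}[m] \;\le\; C\int_Q a(x)\,|\widetilde G|^2 e^{2s\varphi}\,dxdt \;+\; (\text{the boundary terms, bounded by those on the right of \eqref{Carlm FP}}),
\]
where $\mathcal{I}[m]$ abbreviates the left-hand side of \eqref{Carlm FP}. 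Since $|\widetilde G|^2 \le 2|G|^2 + 2|b|^2|m_x|^2$, the only genuinely new bulk quantity to control is $\int_Q a\,|b|^2|m_x|^2 e^{2s\varphi}\,dxdt$.

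The key step is to dominate this term using \emph{both} structural hypotheses $|b(x,t)|\le C\sqrt{a(x)}$ and $|a_x(x)|\le C\sqrt{a(x)}$. First, $|b|^2 \le C^2 a$ yields $\int_Q a|b|^2|m_x|^2 e^{2s\varphi} \le C^2\int_Q a^2|m_x|^2 e^{2s\varphi}$. Then I would write $a\,m_x = (am)_x - a_x m$, so that $a^2 m_x^2 \le 2|(am)_x|^2 + 2a_x^2 m^2 \le 2|(am)_x|^2 + 2C^2 a\,m^2$, the last inequality using $|a_x|\le C\sqrt a$ once more. Hence
\[
\int_Q a\,|b|^2|m_x|^2 e^{2s\varphi}\,dxdt \;\le\; 2C^2\int_Q |(am)_x|^2 e^{2s\varphi}\,dxdt \;+\; 2C^4\int_Q a\,m^2 e^{2s\varphi}\,dxdt .
\]

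Finally I would close the estimate by absorption into the left-hand side of \eqref{Carlm FP}. The term $\int_Q |(am)_x|^2 e^{2s\varphi}$ equals $\lambda^{-1}$ times the left-hand side term $\int_Q \lambda|(am)_x|^2 e^{2s\varphi}$, so enlarging $\lambda_0$ makes its coefficient as small as needed; likewise, since $\varphi = e^{\lambda t}\ge 1$, the term $\int_Q a\,m^2 e^{2s\varphi}$ is bounded by $(s\lambda^2)^{-1}\int_Q s\lambda^2\varphi\,a\,m^2 e^{2s\varphi}$, so enlarging $s_0(\lambda)$ kills its coefficient too. Collecting the remaining terms gives precisely \eqref{Carlm FP}. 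I do not expect a real obstacle here, as the argument runs parallel to the HJB drift lemma; the one point requiring care is that controlling $a^2 m_x^2$ forces the use of the assumption on $a_x$ (which, as the paper remarks, imposes $\beta\ge 2$ for $a(x)=x^\beta(1-x)^\beta$) — without it, one could not turn the $(am)_x$-term on the left into a bound for the drift perturbation.
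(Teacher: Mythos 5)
Your argument is correct and is essentially the paper's own proof: the paper substitutes $v=am$ and perturbs the source of the drift-free equation by $-bv_x+\frac{ba_x}{a}v$, which is exactly your term $-a\,b\,m_x$ written out via $a m_x=(am)_x-a_x m$, after which both proofs use $|b|\leqslant C\sqrt{a}$ and $|a_x|\leqslant C\sqrt{a}$ and absorb the resulting bulk terms into $\lambda\int_Q|(am)_x|^2e^{2s\varphi}$ and $\int_Q s\lambda^2\varphi\,a\,m^2e^{2s\varphi}$ by enlarging $\lambda$ and $s$. The only cosmetic difference is that you invoke the preceding drift-free Fokker--Planck lemma directly in the variable $m$, whereas the paper re-applies \cite[Lemma 2.4]{PAPER} to $v=am$.
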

\begin{proof} 
Let us consider the change of variable $v=am$, then the equation \eqref{eq1to3} is equivalent to the following one 
\begin{empheq}[left = \empheqlbrace]{alignat=2}
\begin{aligned}
& v_{t}(x,t) - a(x) v_{xx}(x,t)+b(x,t)v_{x}-\frac{b(x,t)a_x(x)}{a(x)}v= a(x)G =F, &&\text{ in } Q,\\
& v=0, &&\text{ on } \Sigma.
\label{eq1to5}
\end{aligned}
\end{empheq}
Setting $\Tilde{F}=F-b(x,t)v_{x}+ \frac{b(x,t)a_x(x)}{a(x)}v$
    using the result of \cite[Lemma 2.4]{PAPER}, we obtain
    \begin{equation}
         \begin{aligned}
         & \int_{Q} \frac{1}{s\varphi}\left(a(x)\vert v_{xx}\vert^2+\frac{1}{ a(x)}\vert v_{t}\vert^2 \right)e^{2s\varphi}dx dt+\int_{Q} \lambda \vert v_{x}\vert^2 e^{2s\varphi} dx dt+\int_{Q} s\lambda^2 \varphi \frac{1}{a(x)}v^2 e^{2s\varphi} dxdt\\& \leqslant C \int_{Q}\frac{1}{a(x)}\vert \Tilde{F} \vert^2 e^{2s\varphi}dxdt +Cs\lambda \left(\varphi(T)\| v(\cdot,T)\|_{\frac{1}{a}}^2+\| v_{x}(\cdot,T)\|_{L^2(0,1)}^2\right)e^{2s\varphi(T)}\\& +C\left(s\lambda \| v(\cdot,0)\|_{\frac{1}{a}}^2 +\| v_{x}(\cdot,0)\|_{L^2(0,1)}^2\right)e^{2s}\\ & \leqslant C \int_{Q}\frac{1}{a(x)}\vert F-b(x,t)v_{x}+ \frac{b(x,t)a_x(x)}{a(x)}v \vert^2 e^{2s\varphi} dxdt +Cs\lambda \left(\varphi(T)\| v(\cdot,T)\|_{\frac{1}{a}}^2+\| v_{x}(\cdot,T)\|_{L^2(0,1)}^2\right)e^{2s\varphi(T)}\\& +C\left( s\lambda\| v(\cdot,0)\|_{\frac{1}{a}}^2 +\| v_{x}(\cdot,0)\|_{L^2(0,1)}^2\right)e^{2s}\\ & \leqslant C \int_{Q}\left\lbrace\frac{1}{a(x)}\vert F\vert^2 + \frac{b(x,t)^2}{a(x)}\vert v_{x}\vert^2+ \left(\frac{b(x,t)a_x(x)}{a(x)}\right)^2\frac{1}{a(x)}\vert  v\vert^2 \right\rbrace e ^{2s\varphi}dxdt \\&+Cs\lambda \left(\varphi(T)\| v(\cdot,T)\|_{\frac{1}{a}}^2+\| v_{x}(\cdot,T)\|_{L^2(0,1)}^2\right)e^{2s\varphi(T)} +C\left( s\lambda\| v(\cdot,0)\|_{\frac{1}{a}}^2 +\| v_{x}(\cdot,0)\|_{L^2(0,1)}^2\right)e^{2s}.
    \end{aligned}
    \end{equation}
    This achieves the proof.
\end{proof}
\subsection{Linearized MFG system}\label{Subsec2.3}

In this section, we combine the Carleman estimates of the above sections to obtain the following one for the linearized Mean-Field Game system
\begin{empheq}[left = \empheqlbrace]{alignat=2}
\begin{aligned}
& u_t(x,t) +a(x) u_{xx}(x,t)+d_{1}(x,t) u_{x}=   d_{2}(x,t) m + F, &&\quad \text { in } Q, \\
& m_{t}(x,t) -\left(a(x) m(x,t)\right)_{xx} +c_{1}(x,t)m_{x}=b(x,t) m+ c_{2}(x,t)u_{x} + \rho(x,t) u_{xx} + G, &&\quad \text { in } Q, \\
& u= am=0, &&\quad \text { on } \Sigma\\
& m(\cdot,0)=m_0,\quad u(\cdot,T)=h, &&\quad \text { in } \Omega. \label{eq1to155}
\end{aligned}
\end{empheq}

\begin{lemma}\label{33}
There exists a constant $\lambda_{0}>0$ such that for each $\lambda>\lambda_{0},$ we can find a constant $s_{0}(\lambda)>0$ so that there exists a constant $C>0$ such that
\begin{equation*}
    \begin{aligned}
        &\int_{Q}\left(\frac{1}{a(x)}\vert u_{t}\vert^2+ a(x)\vert u_{xx}\vert^2+ s\lambda \varphi \vert u_{x}\vert^2+ \frac{s^2\lambda^2\varphi^2}{a(x)}\vert u\vert^2\right)e^{2s\varphi}dxdt\\ &+\int_{Q}\frac{1}{s\varphi}\left(a(x)\vert (am)_{xx}\vert^2+a(x)\vert m_{t}\vert^2 \right)e^{2s\varphi}dx dt+\int_{Q} \lambda \vert (am)_{x}\vert^2 e^{2s\varphi} dx dt+\int_{Q} s\lambda^2 \varphi a(x)\vert m\vert^2 e^{2s\varphi} dxdt\\&\leqslant C  \int_{Q}s\varphi\frac{1}{a(x)}\vert Fe^{s\varphi}\vert^2 dxdt+C \int_{Q}a(x)\vert Ge^{s\varphi}\vert^2 dxdt + Cs\left( s\lambda\varphi(T)\| u(\cdot,T)\|_{\frac{1}{a}}^2+\| u(\cdot,T)\|_{1,\frac{1}{a}}^2 \right)e^{2s\varphi(T)} \\&+Cs\left( s\lambda\| u(\cdot,0)\|_{\frac{1}{a}}^2+\| u(\cdot,0)\|_{1,\frac{1}{a}}^2\right)e^{2s}  +C \left(s\lambda \varphi(T)\| m(\cdot,T)\|_{a}^2+\| m(\cdot,T)\|_{1,a}^2\right)e^{2s\varphi(T)}\\&+C\left( s\lambda \| m(\cdot,0)\|_{a}^2 +\| m(\cdot,0)\|_{1,a}^2\right)e^{2s}
    \end{aligned}
\end{equation*}
for all $s\geqslant s_{0}(\lambda)$ and $(u,m)\in H^2_{\frac{1}{a}}(0,1) \times H_{2,a}(0,1)$ satisfying  \eqref{eq1to155}.
\end{lemma}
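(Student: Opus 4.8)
The plan is to invoke the two single-equation Carleman estimates already established — estimate \eqref{driftterm} for the HJB equation with a drift term and estimate \eqref{Carlm FP} for the FP equation with a drift term — applied separately to the two equations of \eqref{eq1to155} with all coupling terms moved into the right-hand sides, and then to add the two resulting inequalities and absorb the coupling contributions into the left-hand sides by choosing the parameters $\lambda$ and $s$ large.

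First I would apply \eqref{driftterm} to the first equation of \eqref{eq1to155}, viewing $d_{1}$ as the drift (it satisfies $|d_{1}|\leqslant C\sqrt a$) and $d_{2}m+F$ as the source. Splitting $\frac{1}{a}|d_{2}m+F|^2\leqslant \frac{2}{a}|d_{2}|^2|m|^2+\frac{2}{a}|F|^2$ and using $|d_{2}|\leqslant Ca$, the $m$-part of the source is controlled by $C\int_{Q}s\varphi\, a|m|^2 e^{2s\varphi}\,dx\,dt$; hence \eqref{driftterm} bounds the whole weighted left-hand side for $u$ by $C\int_{Q}s\varphi\frac{1}{a}|F|^2 e^{2s\varphi}+C\int_{Q}s\varphi\, a|m|^2 e^{2s\varphi}$ plus the boundary terms in $u$. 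Next I would apply \eqref{Carlm FP} to the second equation of \eqref{eq1to155}, with $c_{1}$ as the drift (it satisfies $|c_{1}|\leqslant C\sqrt a$, and $|a_{x}|\leqslant C\sqrt a$ holds as required) and $bm+c_{2}u_{x}+\rho u_{xx}+G$ as the source. Since $b,c_{2},\rho\in L^{\infty}(Q)$ and $a$ is bounded on $[0,1]$,
\[
a\,|bm+c_{2}u_{x}+\rho u_{xx}|^2\;\leqslant\; C\bigl(a|m|^2+|u_{x}|^2+a|u_{xx}|^2\bigr),
\]
so \eqref{Carlm FP} bounds the whole weighted left-hand side for $m$ by $C\int_{Q}a|G|^2 e^{2s\varphi}+C\int_{Q}\bigl(a|m|^2+|u_{x}|^2+a|u_{xx}|^2\bigr)e^{2s\varphi}$ plus the boundary terms in $m$.

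Adding the two estimates, the terms $C\int_{Q}s\varphi\, a|m|^2 e^{2s\varphi}$ (from $d_{2}m$) and $C\int_{Q}a|m|^2 e^{2s\varphi}$ (from $bm$) are absorbed by $\int_{Q}s\lambda^2\varphi\, a|m|^2 e^{2s\varphi}$ on the left-hand side of \eqref{Carlm FP} — the first once $\lambda^2$ dominates the fixed constant $C$, the second once $s$ is large (recall $\varphi\geqslant 1$); the term $C\int_{Q}|u_{x}|^2 e^{2s\varphi}$ (from $c_{2}u_{x}$) is absorbed by $\int_{Q}s\lambda\varphi|u_{x}|^2 e^{2s\varphi}$ on the left-hand side of \eqref{driftterm} for $s$ large. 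The boundary terms are simply collected: writing $\|m\|_{1,a}^2=\|m\|_{a}^2+\|(am)_{x}\|_{L^2(0,1)}^2$ and $\|u\|_{1,\frac{1}{a}}^2=\|u\|_{\frac{1}{a}}^2+\|u_{x}\|_{L^2(0,1)}^2$, they are of the form appearing on the right-hand side of the statement.

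The one coupling term that is not absorbed for free is $C\int_{Q}a|u_{xx}|^2 e^{2s\varphi}$, coming from $\rho u_{xx}$ in the FP source: the matching quantity $\int_{Q}a|u_{xx}|^2 e^{2s\varphi}$ on the left-hand side of \eqref{driftterm} carries no large parameter, so the constant $C$ is not recovered. This is the main obstacle. I would handle it by multiplying the HJB estimate \eqref{driftterm} by a sufficiently large constant $K$ — depending only on $\|\rho\|_{L^{\infty}(Q)}$ and the constant of \eqref{Carlm FP} — before adding it to \eqref{Carlm FP}; then $K\int_{Q}a|u_{xx}|^2 e^{2s\varphi}$ on the left absorbs $C\int_{Q}a|u_{xx}|^2 e^{2s\varphi}$ on the right. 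This rescaling merely multiplies the $F$-term and the $u$-boundary terms of \eqref{driftterm} by $K$ (harmless), and turns the $d_{2}m$ contribution into $KC\int_{Q}s\varphi\, a|m|^2 e^{2s\varphi}$, which is still absorbed by $\frac{1}{2}\int_{Q}s\lambda^2\varphi\, a|m|^2 e^{2s\varphi}$ once $\lambda$ is chosen large relative to $K$ and $s\geqslant s_{0}(\lambda)$ afterwards. With these choices all coupling terms are absorbed, and the claimed inequality follows.
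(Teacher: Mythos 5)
Your proposal is correct and follows the paper's overall strategy: apply the single-equation Carleman estimates \eqref{driftterm} and \eqref{Carlm FP} to the two equations of \eqref{eq1to155} with the coupling terms moved to the sources, add the results, and absorb the coupling contributions using the large parameters. The only point of genuine divergence is your treatment of the term $C\int_{Q}a|u_{xx}|^{2}e^{2s\varphi}\,dx\,dt$ generated by $\rho u_{xx}$, which you correctly single out as the one coupling term whose counterpart on the left-hand side carries no large parameter. The paper handles it by bootstrapping: in \eqref{37} it re-applies the HJB Carleman estimate to bound $\int_{Q}a|u_{xx}|^{2}e^{2s\varphi}\,dx\,dt$ by $C\int_{Q}s\varphi\frac{1}{a}|\tilde F|^{2}e^{2s\varphi}\,dx\,dt$ plus boundary terms, and then feeds this back through \eqref{36}, so that the offending term reappears as $C\int_{Q}s\varphi\, a|m|^{2}e^{2s\varphi}\,dx\,dt$ and is absorbed by $\lambda$ large. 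You instead multiply the HJB estimate by a fixed large constant $K$ before adding, so that $K\int_{Q}a|u_{xx}|^{2}e^{2s\varphi}\,dx\,dt$ on the left absorbs the term directly. The two devices are equivalent in effect and lead to the same right-hand side; your version has the merit of making the order of quantifiers explicit ($K$ fixed first from $\|\rho\|_{L^{\infty}}$ and the FP Carleman constant, then $\lambda$ large relative to $K$, then $s\geqslant s_{0}(\lambda)$), a point the paper leaves implicit. Both versions rely on the constants in \eqref{driftterm} and \eqref{Carlm FP} being uniform in $\lambda$ and $s$, which is indeed how they are produced in the earlier proofs.
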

\begin{proof}
    We set $\Tilde{F}=d_{2} m +F$ and $\Tilde{G}=b m+ c_{2}u_{x} + \rho u_{xx}+G$. We note that
    $$\vert \Tilde{F}(x,t)\vert \leqslant  \vert d_{2}(x,t)m(x,t)\vert +\vert F\vert,$$
    $$\vert \Tilde{G}(x,t)\vert \leqslant C \left( \vert m(x,t)\vert+\vert u_{x}\vert+ \vert u_{xx}\vert+\vert G \vert \right)$$
    for all $(x,t)\in Q$. Using $\vert d_{2}(x,t)\vert \leqslant C a(x)$ we obtain
   \begin{equation}\label{36}
        \begin{aligned}
           \int_{Q}s\varphi\frac{1}{a(x)}\vert \Tilde{F}e^{s\varphi}\vert^2 dxdt&\leqslant  C \left(\int_{Q}s\varphi\frac{d_{2}(x,t)^2}{a(x)}\vert m\vert^2e^{2s\varphi} dx dt
         +\int_{Q}s\varphi\frac{1}{a(x)}\vert F\vert^2 e ^{2s\varphi}dxdt\right)\\&\leqslant C\left(\int_{Q}s\varphi a(x)\vert m\vert^2e^{2s\varphi} dxdt+\int_{Q}s\varphi\frac{1}{a(x)}\vert F\vert^2 e ^{2s\varphi}dxdt\right).\\
    \end{aligned}
   \end{equation}
   Moreover, \eqref{Carlm HJB} implies that
  \begin{equation}\label{37}
     \begin{aligned}
          \int_{Q}a(x)\vert \Tilde{G}\vert^2e^{2s\varphi} dxdt &\leqslant C \int_{Q}a(x)\left(\vert m \vert^2 +\vert u_x\vert^2 +\vert u_{xx}\vert^2 \right)e^{2s\varphi}dxdt +C\int_{Q}a(x)\vert G\vert^2 e^{2s\varphi}dxdt \\&\hspace{-1.5cm}\leqslant C \int_{Q}\left(a(x)\vert m \vert^2 +C'\vert u_x\vert^2 \right)e^{2s\varphi}dxdt +C \int_{Q}a(x)\vert u_{xx}\vert^2e^{2s\varphi}dxdt+C\int_{Q}a(x)\vert G\vert^2e^{2s\varphi} dxdt\\&\hspace{-1.5cm} \leqslant C \int_{Q}\left(a(x)\vert m \vert^2 +C'\vert u_x\vert^2\right)e^{2s\varphi}dxdt+C  \int_{Q}s\varphi\frac{1}{a(x)}\vert \Tilde{F}\vert^2 e^{2s\varphi}dxdt\\&\hspace{-1.3cm} +Cs\left( s\lambda\varphi(T)\| u(\cdot,T)\|_{\frac{1}{a}}^2+\| u(\cdot,T)\|_{1,\frac{1}{a}}^2 \right)e^{2s\varphi(T)}\\&\hspace{-1.3cm} +Cs\left( s\lambda\| u(\cdot,0)\|_{\frac{1}{a}}^2+\| u(\cdot,0)\|_{1,\frac{1}{a}}^2\right)e^{2s} +C\int_{Q}a(x)\vert G\vert^2 e^{2s\varphi}dxdt.
     \end{aligned}
  \end{equation}
Adding \eqref{driftterm} and \eqref{Carlm FP}, and applying \eqref{36}-\eqref{37}, we obtain
  \begin{align*}
      &\int_{Q}\left\lbrace\left(\frac{1}{a(x)}\vert u_{t}\vert^2+ a(x)\vert u_{xx}\vert^2+ s\lambda \varphi \vert u_{x}\vert^2+ \frac{s^2\lambda^2\varphi^2}{a(x)}\vert u\vert^2\right)e^{2s\varphi}dxdt\right\rbrace\\ &+\int_{Q}\left\lbrace\frac{1}{s\varphi}\left(a(x)\vert (am)_{xx}\vert^2+a(x)\vert m_{t}\vert^2 \right)e^{2s\varphi}dx dt+\int_{Q} \lambda \vert (am)_{x}\vert^2 e^{2s\varphi} dx dt+\int_{Q} s\lambda^2 \varphi a(x)\vert m\vert^2 e^{2s\varphi} dxdt\right\rbrace\\& \leqslant C\int_{Q}s\varphi\frac{1}{a(x)}\vert \Tilde{F}\vert^2 e^{2s\varphi}dxdt+\int_{Q}a(x)\vert \Tilde{G}\vert^2e^{2s\varphi} dxdt+Cs\left( s\lambda\varphi(T)\| u(\cdot,T)\|_{\frac{1}{a}}^2+\| u(\cdot,T)\|_{1,\frac{1}{a}}^2 \right)e^{2s\varphi(T)}\\&\quad +Cs\left( s\lambda\| u(\cdot,0)\|_{\frac{1}{a}}^2+\| u(\cdot,0)\|_{1,\frac{1}{a}}^2\right)e^{2s}+C\left(s\lambda \varphi(T)\| m(\cdot,T)\|_{a}^2+\| (am)_{x}(\cdot,T)\|_{2}^2\right)e^{2s\varphi(T)}\\& \quad +C\left( s\lambda \| m(\cdot,0)\|_{a}^2 +\| (am)_{x}(\cdot,0)\|_{2}^2\right)e^{2s} \\&\leqslant C\int_{Q}s\varphi a(x)\vert m\vert^2e^{2s\varphi} dxdt + C\int_{Q}s\varphi\frac{1}{a(x)}\vert F\vert^2 e^{2s\varphi}dxdt+ C \int_{Q}\left(a(x)\vert m \vert^2 +C'\vert u_x\vert^2  \right)e^{2s\varphi}dxdt\\
      &\quad +C\int_{Q}a(x)\vert G\vert^2 e ^{2s\varphi} dxdt+Cs\left( s\lambda\varphi(T)\| u(\cdot,T)\|_{\frac{1}{a}}^2+\| u(\cdot,T)\|_{1,\frac{1}{a}}^2 \right)e^{2s\varphi(T)}\\
      &\quad +Cs\left( s\lambda\| u(\cdot,0)\|_{\frac{1}{a}}^2+\| u(\cdot,0)\|_{1,\frac{1}{a}}^2\right)e^{2s} +C \left(s\lambda \varphi(T)\| m(\cdot,T)\|_{a}^2+\| (am)_{x}(\cdot,T)\|_{2}^2\right)e^{2s\varphi(T)}\\
      &\quad +C\left( s\lambda \| m(\cdot,0)\|_{a}^2 +\| (am)_{x}(\cdot,0)\|_{2}^2\right)e^{2s}.
  \end{align*}
This yields the desired estimate.
\end{proof}
\section{Stability of the backward problem for linearized MFG system} \label{sec3}In this section, we prove H\"older and logarithmic stability estimates for the backward problem of linearized MFG systems. We start with the case $0<t_{0}<T$.
\begin{theorem}\label{38}
Let $(u,m)\in H^2_{\frac{1}{a}}(0,1)\times H_{2,a}(0,1)$ satisfy \eqref{eq1to155} with 
$$\| u(\cdot,0)\|_{1,\frac{1}{a}}\leqslant M \;\;\; \text{ and }\;\;\;\| m(\cdot,0)\|_{1,a}\leqslant M.$$
%\textcolor{blue}{We treat two cases:\\
%{\bf Case $0<t_0 < T$:}}\\
There exist constants $C>0$ and $\theta \in (0,1)$ depending on $t_{0}\in (0,T)$ and $M$ such that 
\begin{align*}
    \| u(\cdot,t_{0})\|_{\frac{1}{a}}+\| m(\cdot,t_{0})\|_{a}\leqslant C\left( D_{0}^{\theta}+D_{0}\right),
\end{align*}
where 
$$ D_{0}=\| u(\cdot,T)\|_{1,\frac{1}{a}}+\| m(\cdot,T)\|_{1,a}.
$$
\end{theorem}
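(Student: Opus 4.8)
The plan is to combine the global Carleman estimate of Lemma~\ref{33}, applied with $F\equiv G\equiv 0$ (the relevant homogeneous situation for the backward problem, the general case reducing to it by linearity), with two elementary energy inequalities that upgrade the weighted space--time quantities on its left-hand side into pointwise-in-time bounds at $t=t_0$. I fix $\lambda>\lambda_0$ once and for all, set $\delta:=t_0/2$ so that $0<t_0-\delta<t_0<T$, and let all constants depend freely on $\lambda,t_0,T$ and the $L^\infty$-norms of the coefficients; $s\ge s_0(\lambda)$ will be optimized at the very end.

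\emph{Step 1 (energy inequalities).} For the backward HJB equation I put $y(t):=\|u(\cdot,t)\|_{\frac1a}^2$; differentiating in $t$, integrating by parts in $x$ (using $u=0$ on $\Sigma$) and absorbing the lower-order terms via $|d_1|\le C\sqrt a$ and $|d_2|\le Ca$ gives $y'(t)\ge -Cy(t)-C\|m(\cdot,t)\|_a^2$, and a Gronwall argument on $[t_0,T]$ yields
$$\|u(\cdot,t_0)\|_{\frac1a}^2\le C\|u(\cdot,T)\|_{\frac1a}^2+C\int_{t_0}^{T}\|m(\cdot,t)\|_a^2\,dt .$$
For the Fokker--Planck equation I pass to $v:=am$, which satisfies a degenerate parabolic equation of the type already treated in Section~\ref{sec2} with additional right-hand side $\tfrac{c_1a_x}{a}v+bv+ac_2u_x+a\rho u_{xx}$, and I set $z(t):=\|m(\cdot,t)\|_a^2=\|v(\cdot,t)\|_{\frac1a}^2$. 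Using $v=0$ on $\Sigma$, the hypotheses $|c_1|,|a_x|\le C\sqrt a$, $b,c_2,\rho\in L^\infty(Q)$, and simply discarding the good negative term $-2\|v_x\|_{L^2}^2$, I obtain the one-sided inequality $z'(t)\le Cz(t)+C\Phi(t)$ with $\Phi(t):=\|u_x(\cdot,t)\|_{L^2(0,1)}^2+\int_0^1 a\,u_{xx}^2(x,t)\,dx$. Integrating this from any $t\in[t_0-\delta,t_0]$ forward to $t_0$ and averaging over $t\in[t_0-\delta,t_0]$ gives
$$\|m(\cdot,t_0)\|_a^2\le C\Bigl(\int_{t_0-\delta}^{t_0}\|m(\cdot,t)\|_a^2\,dt+\int_{t_0-\delta}^{t_0}\Phi(t)\,dt\Bigr).$$

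\emph{Step 2 (Carleman estimate and optimization).} Applying Lemma~\ref{33} with $F\equiv G\equiv 0$ and using that $\varphi(t)=e^{\lambda t}\ge 1$ is increasing with $\varphi(0)=1$, that $\|u(\cdot,0)\|_{1,\frac1a},\|m(\cdot,0)\|_{1,a}\le M$ and $\|u(\cdot,T)\|_{1,\frac1a}+\|m(\cdot,T)\|_{1,a}=D_0$, the right-hand side is $\le Cs^2\bigl(D_0^2e^{2s\varphi(T)}+M^2e^{2s}\bigr)$ for the fixed $\lambda$. On the left-hand side I keep only the nonnegative terms $s\lambda\varphi|u_x|^2$, $a|u_{xx}|^2$, $s\lambda^2\varphi a|m|^2$ (all times $e^{2s\varphi}$), restrict the time integration to $[t_0-\delta,t_0]$ respectively $[t_0,T]$, and bound the weights from below by $e^{2s\varphi(t_0-\delta)}$; this controls $\int_{t_0-\delta}^{t_0}\|m\|_a^2\,dt$, $\int_{t_0}^{T}\|m\|_a^2\,dt$ and $\int_{t_0-\delta}^{t_0}\Phi(t)\,dt$. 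Together with Step~1 and $\|u(\cdot,T)\|_{\frac1a}\le D_0$ this yields
$$X:=\|u(\cdot,t_0)\|_{\frac1a}^2+\|m(\cdot,t_0)\|_a^2\ \le\ Cs^2\bigl(D_0^2e^{c_1s}+M^2e^{-c_2s}\bigr)\qquad\text{for all }s\ge s_0(\lambda),$$
with $c_1:=2(\varphi(T)-\varphi(t_0-\delta))>0$ and $c_2:=2(\varphi(t_0-\delta)-1)>0$. If $D_0$ is small enough that $s^\ast:=(c_1+c_2)^{-1}\log(M^2/D_0^2)\ge s_0(\lambda)$, choosing $s=s^\ast$ balances the two terms and gives $X\le C(1+\log(1/D_0))M^{2c_1/(c_1+c_2)}D_0^{2c_2/(c_1+c_2)}\le C_M D_0^{2\theta}$ for any fixed $\theta<c_2/(c_1+c_2)$ (the logarithm absorbed by slightly lowering the exponent); otherwise $D_0$ is bounded below by a positive constant depending on $M,\lambda,t_0$, and taking $s=s_0(\lambda)$ gives $X\le CD_0^2$. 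In all cases $\sqrt X\le C(D_0^\theta+D_0)$, and since $\|u(\cdot,t_0)\|_{\frac1a}+\|m(\cdot,t_0)\|_a\le\sqrt2\,\sqrt X$ the claim follows with $\theta\in(0,1)$ and $C$ depending on $t_0$ and $M$.

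\emph{Main obstacle.} The delicate step is the Fokker--Planck energy estimate in Step~1: the FP equation is ill-posed backward in time, yet because its forward evolution is dissipative the \emph{one-sided} inequality $z'\le Cz+C\Phi$ holds, and this can still be integrated from times $t<t_0$ forward to $t_0$ to produce an \emph{upper} bound for $z(t_0)$ by a short space--time integral of $\|m\|_a^2$ together with the $u$-term $\Phi$ --- precisely the quantities the Carleman estimate controls near $t_0$ (with the modest weight $e^{2s\varphi(t_0-\delta)}$, whereas the final and initial data enter with weights $e^{2s\varphi(T)}$ and $e^{2s}$, producing the favorable exponents $c_1,c_2>0$). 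Making the degenerate lower-order terms $\tfrac{c_1a_x}{a}v$, $ac_2u_x$, $a\rho u_{xx}$ fit this scheme uses $|c_1|,|a_x|\le C\sqrt a$ crucially, as well as the observation that $\|u_x\|_{L^2}^2$ and $\int a\,u_{xx}^2$ are already bounded by the Carleman left-hand side; the remaining weight bookkeeping and the scalar optimization in $s$ are then routine.
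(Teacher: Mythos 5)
Your proposal is correct, and it reaches the conclusion by a genuinely different route in the step that converts the Carleman estimate's space--time integrals into pointwise-in-time bounds at $t_0$. The paper instead exploits the terms $\tfrac{1}{a}|u_t|^2$ and $\tfrac{a}{s\varphi}|m_t|^2$ on the left-hand side of Lemma \ref{33} to bound $\|u_t\|_{L^2(t_0,T;L^2_{1/a})}$ and $\|m_t\|_{L^2(t_0,T;L^2_a)}$, and then recovers $u(\cdot,t_0)$ and $m(\cdot,t_0)$ via the Newton--Leibniz identity $\psi(\cdot,t_0)=\psi(\cdot,T)+\int_T^{t_0}\psi_t\,dt$ applied to both components; no energy inequalities from the PDEs are needed, and no auxiliary interval $[t_0-\delta,t_0]$ appears (the exponent $\theta$ ends up being $\alpha(t_0)/(3\varphi(T)+\alpha(t_0))$ with $\alpha(t_0)=\varphi(t_0)-1$). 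You instead use only the zeroth- and first-order terms $s\lambda^2\varphi a|m|^2$, $s\lambda\varphi|u_x|^2$, $a|u_{xx}|^2$ from the Carleman left-hand side and supply the pointwise recovery yourself, via a backward Gronwall inequality for the (dissipative-in-reverse-time) HJB component and a forward one-sided inequality plus averaging over $[t_0-\delta,t_0]$ for the FP component; your differential inequalities check out under the stated hypotheses $|d_1|,|c_1|,|a_x|\le C\sqrt a$, $|d_2|\le Ca$, $b,c_2,\rho\in L^\infty$. The trade-off: the paper's route is shorter and uniform in $t_0$ once the $\psi_t$ terms are in the Carleman estimate, while yours does not use those terms at all and makes the mechanism (dissipativity in the correct time direction for each equation) explicit. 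The final optimization in $s$ is the same in both; your handling of the polynomial prefactor $s^2$ by slightly lowering $\theta$ is a cosmetic variant of the paper's absorption $s^2e^{2s\varphi(T)}\le e^{3s\varphi(T)}$, $s^2e^{-2s\alpha}\le Ce^{-s\alpha}$. One small caveat you share with the paper: the theorem as stated allows nonzero $F,G$ in \eqref{eq1to155} while $D_0$ contains no source terms, so both proofs implicitly take $F\equiv G\equiv 0$; you at least flag this explicitly.
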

\begin{proof}
Using our Carleman estimate and the fact that $\varphi(t_{0})\leqslant \varphi(t)$ for $t_{0}\leqslant t\leqslant T,$ yields
    \begin{align*}
        e^{2s\varphi(t_{0})}\int_{(0,1)\times (t_{0},T)} \left(\frac{1}{a(x)}\vert u_{t}\vert ^2 + \frac{s^2\lambda^2 \varphi^2}{a(x)}u^2\right)dx dt \leqslant &Cs^2\lambda \varphi(T)\| u(\cdot,T)\|_{1,\frac{1}{a}}^2 e^{2s\varphi(T)}+Cs^2\lambda M^2 e^{2s}\\ &+Cs\lambda \varphi(T)\| m(\cdot,T)\|_{1,a}^2e^{2s\varphi(T)}+Cs\lambda M^2e^{2s},
    \end{align*}
    and 
    \begin{align*}
       e^{2s\varphi(t_{0})}  \int_{(0,1)\times (t_{0},T)}\left(\frac{a(x)}{s\varphi}\vert m_{t}\vert^2 +s\lambda^2 \varphi a(x)\vert m\vert^2  \right)dxdt \leqslant &Cs^2\lambda \varphi(T)\| u(\cdot,T)\|_{1,\frac{1}{a}}^2 e^{2s\varphi(T)}+Cs^2\lambda M^2 e^{2s}\\ &+Cs\lambda \varphi(T)\| m(\cdot,T)\|_{1,a}^2e^{2s\varphi(T)}+Cs\lambda M^2e^{2s}.
    \end{align*}
    For $\lambda >0$ sufficiently large, we have

      \begin{align*}
       \int_{(0,1)\times (t_{0},T)} \left(\frac{1}{a(x)}\vert u_{t}\vert ^2 +\frac{s^2\lambda^2 \varphi^2 }{a(x)}u^2\right)&dx dt \leqslant Cs^2\| u(\cdot,T)\|_{1,\frac{1}{a}}^2 e^{2s(\varphi(T)-\varphi(t_{0}))}+Cs^2 M^2  e^{-2s\alpha(t_{0})}\\ &+Cs\| m(\cdot,T)\|_{1,a}^2e^{2s(\varphi(T)-\varphi(t_{0}))}+Cs M^2 e^{-2s\alpha(t_{0})},
    \end{align*}
    and
       \begin{align*}
     \int_{(0,1)\times (t_{0},T)}\left(\frac{a(x)}{s}\vert m_{t}\vert^2 +s\lambda^2 \varphi a(x)\vert m\vert^2  \right)&dxdt \leqslant Cs^2\| u(\cdot,T)\|_{1,\frac{1}{a}}^2 e^{2s(\varphi(T)-\varphi(t_{0}))}+Cs^2 M^2 e^{-2s\alpha(t_{0})}\\ &+Cs\| m(\cdot,T)\|_{1,a}^2e^{2s(\varphi(T)-\varphi(t_{0}))}+CsM^2e^{-2s\alpha(t_{0})},
    \end{align*}
    with $\alpha(t_{0}):=\varphi(t_{0})-1$ and the constant depends on $T$ and $\lambda$. Next,
    \begin{equation}\label{39}
        \begin{aligned}
            \| u_{t}\|_{L^{2}(t_{0},T, L^2_{\frac{1}{a}}(0,1))}^2 \leqslant Cs^2D_{0}^2e^{2s(\varphi(T)-\varphi(t_{0}))}+Cs^2 M^2 e^{-2s\alpha(t_{0})},
        \end{aligned}
    \end{equation}
    and 
       \begin{equation}\label{40}
        \begin{aligned}
            \| m_{t}\|_{L^{2}(t_{0},T,L^2_{a}(0,1))}^2 \leqslant Cs^3D_{0}^2e^{2s(\varphi(T)-\varphi(t_{0}))}+Cs^3 M^2 e^{-2s\alpha(t_{0})}.
        \end{aligned}
    \end{equation}
    Seeing that for $\psi=u,m$, 
    $$ \psi(x,t_{0})=\int_{T}^{t_{0}}\psi_{t}(x,t)dt +\psi(x,T)\quad  \forall x \in (0,1),$$
    there is a constant $C_{6}>0$ such that for all $t_{0}\in (0,T)$ we have
    \begin{align}\label{41}
    \| u(\cdot,t_{0})\|_{\frac{1}{a}}^2\leqslant C_{6} \| u_{t}\|_{L^{2}(t_{0},T, L^2_{\frac{1}{a}}(0,1))}^2+ C_{6} \| u(\cdot,T)\|^2_{\frac{1}{a}},
\end{align}
and there exists a constant $C'_{6}>0$ such that for all $t_{0}\in (0,T)$ we have
  \begin{align}\label{42}
    \| m(\cdot,t_{0})\|_{a}^2\leqslant C'_{6} \| m_{t}\|_{L^{2}(t_{0},T, L^2_{a}(0,1))}^2+ C'_{6} \| m(\cdot,T)\|^2_{a}.
\end{align}
For the inequality \eqref{41}, we estimate the first term on the right-hand side by \eqref{39} and using $\varphi(T)>1$. We obtain
\begin{equation}\label{43}
    \begin{aligned}
        \| u(\cdot,t_{0})\|_{\frac{1}{a}}^2&\leqslant C_{6}\left(Cs^2D_{0}^2e^{2s(\varphi(T)-\varphi(t_{0}))}+Cs^2 M^2 e^{-2s\alpha(t_{0})}\right)+C_{6} \| u(\cdot,T)\|^2_{\frac{1}{a}}
        \\&\leqslant 
C_{7}s^2D_{0}^2e^{2s\varphi(T)}+C_{7}s^2M^2e^{-2s\alpha(t_{0})} \\ & \leqslant C_{7}D_{0}^2e^{3s\varphi(T)} +C_{7}M^2e^{-s\alpha(t_{0})}
    \end{aligned}
\end{equation}
for all $s>s_{0}$ and all $t_{0}\in (0,T).$ For the inequality \eqref{42}, we estimate the first term on the right-hand side by \eqref{40}; we obtain
\begin{equation}\label{44}
    \begin{aligned}
        \| m(\cdot,t_{0})\|_{a}^2 &\leqslant C'_{6}\left(Cs^3D_{0}^2e^{2s(\varphi(T)-\varphi(t_{0}))}+Cs^3 M^2 e^{-2s\alpha(t_{0})}\right)+C'_{6} \| m(\cdot,T)\|^2_{a}\\& \leqslant 
C'_{7}s^3D_{0}^2e^{2s\varphi(T)}+C'_{7}s^3M^2e^{-2s\alpha(t_{0})} \\ & \leqslant C'_{7}D_{0}^2e^{3s\varphi(T)} +C'_{7}M^2e^{-s\alpha(t_{0})}
    \end{aligned}
\end{equation}
for all $s>s_{0}$ and all $t_{0}\in (0,T).$ Therefore, 
\begin{align}\label{45}
   \| u(\cdot,t_{0})\|_{\frac{1}{a}}^2 \leqslant C_{8}D_{0}^2e^{3s\varphi(T)} +C_{8}M^2e^{-s\alpha(t_{0})} \;\;\; \text{for all } \;\; s>0,
\end{align}
and 
\begin{align}\label{46}
    \| m(\cdot,t_{0})\|_{a}^2 \leqslant C'_{8}D_{0}^2e^{3s\varphi(T)} +C'_{8}M^2e^{-s\alpha(t_{0})} \;\;\; \text{for all } \;\; s>0,
\end{align}
with $C_{8}=C_{7}e^{6s_{0}\varphi(T)}$ and $C'_{8}=C'_{7}e^{6s_{0}\varphi(T)}$. Indeed, For $s\geqslant s_{0}$,   the inequalities are trivially verified.
For $s<s_{0}$,   we have $2s_{0}-s>s_{0}$ and applying \eqref{43} and \eqref{44} for $2s_{0}-s$,  we obtain 
\begin{align*}
   \| u(\cdot,t_{0})\|_{\frac{1}{a}}^2 &\leqslant C_{7}D_{0}^2e^{3(2s_{0}-s)\varphi(T)} +C_{7}M^2e^{-(2s_{0}-s)\alpha(t_{0})} ,
\end{align*}
and 
\begin{align*}
   \| m(\cdot,t_{0})\|_{a}^2 &\leqslant C_{7}D_{0}^2e^{3(2s_{0}-s)\varphi(T)} +C_{7}M^2e^{-(2s_{0}-s)\alpha(t_{0})} .
\end{align*}
Hence,  we obtain 
\begin{align*}
   \| u(\cdot,t_{0})\|_{\frac{1}{a}}^2 \leqslant C_{8}D_{0}^2e^{3s\varphi(T)} +C_{8}M^2e^{-s\alpha(t_{0})} \;\;\; \text{for all } \;\; s<s_0,
\end{align*}
and 
\begin{align*}
    \| m(\cdot,t_{0})\|_{a}^2 \leqslant C'_{8}D_{0}^2e^{3s\varphi(T)} +C'_{8}M^2e^{-s\alpha(t_{0})} \;\;\; \text{for all } \;\;  s<s_0, 
\end{align*}
with $C_{8}=C_{7}e^{6s_{0}\varphi(T)}$ and $C'_{8}=C'_{7}e^{6s_{0}\varphi(T)}$.
Next,
\begin{align*}
    \| u(\cdot,t_{0})\|_{\frac{1}{a}}^2 +\| m(\cdot,t_{0})\|_{a}^2 \leqslant C_{9}D_{0}^2e^{3s\varphi(T)} +C_{9}M^2e^{-s\alpha(t_{0})} \;\;\; \text{for all } \;\; s>0.
\end{align*}
To obtain the desired result, we study two possible cases
\begin{itemize}
    \item If $M\leqslant D_{0}$, 
    we take $s=0$ so that the inequalities \eqref{45} and \eqref{46} yield 
    $$ \| u(\cdot,t_{0})\|_{\frac{1}{a}}^2 +\| m(\cdot,t_{0})\|_{a}^2 \leqslant 2 C_{9}D_{0}^2.$$
    \item  If $M>D_{0}$, 
    we choose $s>0$ such that 
    $$ D_{0}^2e^{3s\varphi(T)}=M^2e^{-s\alpha(t_{0})}.$$
Take
$$ s= \frac{2}{3\varphi(T)+\alpha(t_{0})}\log\frac{M}{D_{0}}>0,$$
so that 
$$ \| u(\cdot,t_{0})\|_{\frac{1}{a}}^2 +\| m(\cdot,t_{0})\|_{a}^2 \leqslant 2 C_{9}M^{\frac{6\varphi(T)}{3\varphi(T)+\alpha(t_{0})}}D_{0}^{\frac{2\alpha(t_{0})}{3\varphi(T)+\alpha(t_{0})}}.
$$
Taking $\theta:=\frac{\alpha(t_{0})}{3\varphi(T)+\alpha(t_{0})}\in (0,1),$ we obtain the result.
\end{itemize}
This completes the proof.
\end{proof}
We end this section with a logarithmic stability estimate in the case $t_{0}=0$. 

\begin{theorem}\label{51}
Let $(u,m)\in H^2_{\frac{1}{a}}(0,1)\times H_{2,a}(0,1)$ satisfy \eqref{eq1to155} with 
\begin{align*}
    &\sum_{k=0}^2\| \partial_{t}^k u(\cdot,0) \|_{1,\frac{1}{a}}\leqslant M,\quad 
    \sum_{k=0}^2\| \partial_{t}^k m(\cdot,0) \|_{1,a}\leqslant M,
\end{align*}
with an arbitrarily chosen constant $M>0.$
%\textcolor{blue}{We treat two cases:\\
%{\bf Case $0<t_0 < T$:}}\\
 Then for any $\alpha\in (0,1)$, there exists a constant $C>0$ such that 
\begin{align*}
    \| u(\cdot,0)\|_{\frac{1}{a}}+  \| m(\cdot,0)\|_{a}\leqslant C\left(\log\frac{1}{D}\right)^{-\alpha},
\end{align*}
where 
$$ D=\sum_{k=0}^2\| \partial_{t}^k u(\cdot,T) \|_{1,\frac{1}{a}} +\sum_{k=0}^2\| \partial_{t}^k m(\cdot,T) \|_{1,a}.$$

\end{theorem}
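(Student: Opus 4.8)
The plan is to interpolate between the interior Hölder bound that already underlies Theorem \ref{38} and an a priori Lipschitz modulus of continuity of the solution in time near $t=0$, and then to optimize a free parameter; the second ingredient is precisely where the extra hypotheses on $\partial_t^k$, $k\le 2$, are used.

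First I would extract, from the proof of Theorem \ref{38} before its final optimization step, the intermediate inequality: for every $s>0$ and every $t\in(0,T)$,
\[
\|u(\cdot,t)\|_{\frac1a}^2+\|m(\cdot,t)\|_{a}^2\leqslant C\bigl(\|u(\cdot,T)\|_{1,\frac1a}^2+\|m(\cdot,T)\|_{1,a}^2\bigr)e^{3s\varphi(T)}+C\bigl(\|u(\cdot,0)\|_{1,\frac1a}^2+\|m(\cdot,0)\|_{1,a}^2\bigr)e^{-s\alpha(t)},
\]
with $\alpha(t)=\varphi(t)-1=e^{\lambda t}-1$. Since $D$ and $M$ of Theorem \ref{51} dominate the data norms appearing here, the same inequality holds with these larger quantities; optimizing in $s$ (in the nontrivial range $D\leqslant M$, the case $D\geqslant M$ being covered at once by the a priori bound $\|u(\cdot,0)\|_{\frac1a}+\|m(\cdot,0)\|_a\leqslant 2M$ since then $\log(1/D)$ stays bounded) gives $\|u(\cdot,t)\|_{\frac1a}+\|m(\cdot,t)\|_{a}\leqslant CM^{1-\theta(t)}D^{\theta(t)}$ with $\theta(t)=\alpha(t)/(3\varphi(T)+\alpha(t))$. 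Using $\alpha(t)\geqslant\lambda t$ and $3\varphi(T)+\alpha(t)\leqslant C_{*}$ on $(0,T/2]$, one has $\theta(t)\geqslant c_{0}t$, so that $\|u(\cdot,t)\|_{\frac1a}+\|m(\cdot,t)\|_{a}\leqslant CM\,e^{-c_{0}t\log(M/D)}$ for $0<t\leqslant T/2$.

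Next I would establish a uniform bound $\sup_{[0,T]}\bigl(\|u_{t}(\cdot,t)\|_{\frac1a}+\|m_{t}(\cdot,t)\|_{a}\bigr)\leqslant CM$. Differentiating \eqref{eq1to155} once and twice in $t$, the pairs $(u_{t},m_{t})$ and $(u_{tt},m_{tt})$ solve systems of the same structure \eqref{eq1to155}, whose source terms involve $u,u_{x},u_{xx},m$ and their first time derivatives, controlled through the coefficient assumptions; combining the well-posedness of the degenerate HJB and FP equations in the weighted spaces $L^2_{\frac1a}(0,1)$ and $L^2_{a}(0,1)$ (the generation results proved above, bootstrapped through the coupling) with the hypotheses $\sum_{k\le2}\|\partial_{t}^{k}u(\cdot,0)\|_{1,\frac1a}\leqslant M$, $\sum_{k\le2}\|\partial_{t}^{k}m(\cdot,0)\|_{1,a}\leqslant M$ and their counterparts at $t=T$ contained in $D$, energy estimates should give $\|u_{t}\|_{L^{2}(0,T;L^{2}_{\frac1a})}+\|u_{tt}\|_{L^{2}(0,T;L^{2}_{\frac1a})}+\|m_{t}\|_{L^{2}(0,T;L^{2}_{a})}+\|m_{tt}\|_{L^{2}(0,T;L^{2}_{a})}\leqslant C(M+D)\leqslant CM$. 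Then from $\|u_{t}(\cdot,t)\|_{\frac1a}^2=\|u_{t}(\cdot,0)\|_{\frac1a}^2+2\int_{0}^{t}\langle u_{tt}(\cdot,\tau),u_{t}(\cdot,\tau)\rangle_{\frac1a}\,d\tau$ and Cauchy--Schwarz one obtains the claimed $\sup$ bound, and hence, writing $u(\cdot,0)=u(\cdot,t)-\int_{0}^{t}u_{\tau}(\cdot,\tau)\,d\tau$ and likewise for $m$,
\[
\|u(\cdot,0)\|_{\frac1a}+\|m(\cdot,0)\|_{a}\leqslant \|u(\cdot,t)\|_{\frac1a}+\|m(\cdot,t)\|_{a}+CMt,\qquad 0<t\leqslant T/2 .
\]

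Finally I would combine the two estimates to get, for $0<t\leqslant T/2$, $\|u(\cdot,0)\|_{\frac1a}+\|m(\cdot,0)\|_{a}\leqslant CM\bigl(e^{-c_{0}t\log(M/D)}+t\bigr)$, and, given $\alpha\in(0,1)$ and $D$ small, choose $t=(\log(M/D))^{-\alpha}\in(0,T/2]$; then $e^{-c_{0}t\log(M/D)}=e^{-c_{0}(\log(M/D))^{1-\alpha}}$ decays faster than any power of $\log(M/D)$ while $t=(\log(M/D))^{-\alpha}$, which yields $\|u(\cdot,0)\|_{\frac1a}+\|m(\cdot,0)\|_{a}\leqslant C(\log(1/D))^{-\alpha}$ after adjusting $C$ and using $\log(M/D)\geqslant\tfrac12\log(1/D)$ for small $D$ (the remaining range of $D$ bounded away from $0$ being trivial). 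The hard part is the second step: the weight $\varphi(t)=e^{\lambda t}$ does not separate $t=0$ from itself, since $\alpha(0)=0$, so the Carleman estimate alone gives no decay of the high-frequency (the $M$) term at the endpoint; one genuinely needs the higher regularity in time to produce the uniform-in-time control of $\|u_{t}\|$ and $\|m_{t}\|$ up to $t=0$, and carrying out the requisite energy estimates for the degenerate differentiated systems in the weighted spaces $L^2_{1/a}$, $L^2_{a}$ is the delicate technical point.
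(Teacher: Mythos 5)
Your overall strategy is viable and genuinely different from the paper's in its second half, but as written it has one real gap. The paper never needs your step 2 (the uniform bound $\sup_{[0,T]}(\|u_t(\cdot,t)\|_{\frac{1}{a}}+\|m_t(\cdot,t)\|_a)\leqslant CM$). Instead, assuming the coefficients are $t$-independent so that $(\partial_t^2u,\partial_t^2m)$ solves a system of the same form, it runs the Theorem \ref{38} machinery on $\partial_t^2(u,m)$ to get a \emph{pointwise-in-$\nu$} bound $\|\partial_t u(\cdot,\nu)\|_{\frac1a}^2+\|\partial_t m(\cdot,\nu)\|_a^2\leqslant CD^2e^{3s\varphi(T)}+CM^2e^{-s\alpha(\nu)}$, writes $\psi(\cdot,0)=\int_T^0\partial_t\psi\,d\nu+\psi(\cdot,T)$, and integrates this bound over $\nu\in(0,T)$; the key computation is $\int_0^Te^{-s\alpha(\nu)}d\nu\leqslant\frac{1}{s\lambda}$, which converts the non-decay of $e^{-s\alpha(\nu)}$ at $\nu=0$ into a $1/s$ gain, after which $s=(\log\frac1D)^{\alpha}$ gives the rate. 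Your route replaces this by the Hölder bound for $(u,m)$ itself at positive time plus a crude Lipschitz modulus in time, balanced by $t=(\log(M/D))^{-\alpha}$; the final optimization is correct and in fact lands directly on the claimed rate for the norm (the paper's optimization, done on squared norms, arguably only yields the exponent $\alpha/2$). But the sup bound on $u_t,m_t$ that your route hinges on is asserted, not proved: the "energy estimates for the differentiated degenerate systems" you invoke are exactly the nontrivial content here, and the cleanest way to obtain that bound is the paper's own device — apply the Carleman/Hölder argument to $(\partial_t u,\partial_t m)$ and $(\partial_t^2u,\partial_t^2m)$ and fix $s=s_0$ — so you have deferred rather than avoided the work. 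You also need, as the paper does, the structural hypothesis that time-differentiation preserves the form of system \eqref{eq1to155} (e.g.\ $t$-independent coefficients); this should be stated.

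A secondary point: the constant in the intermediate inequality you extract from Theorem \ref{38} (cf.\ \eqref{45}) is \emph{not} uniform in $t$, because absorbing $s^2e^{-2s\alpha(t)}$ into $Ce^{-s\alpha(t)}$ costs a factor $\sup_{s}s^2e^{-s\alpha(t)}\sim\alpha(t)^{-2}\sim t^{-2}$ as $t\to0$ (this is why Theorem \ref{38} states that $C$ depends on $t_0$). Your argument survives this — with $t=(\log(M/D))^{-\alpha}$ the resulting factor $t^{-2}=(\log(M/D))^{2\alpha}$ is still beaten by $e^{-c_0(\log(M/D))^{1-\alpha}}$, and alternatively one can keep the $s^2$ through the optimization at the price of a harmless $(\log(M/D))^2$ factor — but you must track this dependence explicitly rather than quoting a $t$-uniform constant.
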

\begin{proof}
To simplify the proof, we assume that the coefficients $d_1, \, d_2,\, c_1, \, c_2, \, b$ are independent of $t$. Otherwise, we can impose additional regularity on these coefficients.
    The proof is based on the stability for $t_{0}>0$ and for $\psi=u, m$
    $$ \psi(x,0)=\int_{T}^0 \partial_{t}\psi(x,t)dt + \psi(x,T), \;\; x \in (0,1).$$
    Then arguing similarly to the proof of Theorem \ref{38} to $\partial_{t}^2u$, we obtain
    \begin{align*}
        \int_{(0,1)\times (t_{0},T)} \left(\frac{1}{a(x)}\vert\partial_{t}^3 u\vert ^2 + \frac{s^2\lambda^2 \varphi^2}{a(x)}\vert\partial_{t}^2u\vert^2\right)dx dt \leqslant Cs^2D^2 e^{2s\varphi(T)}+Cs^2M^2 e^{-2s\alpha(t_{0})},
    \end{align*}
    and
    \begin{align*}
         \int_{(0,1)\times (t_{0},T)}\left(\frac{a(x)}{s}\vert \partial_{t}^3m\vert^2 + s\lambda^2 \varphi a(x)\vert\partial_{t}^2m\vert^2 \right)dx dt \leqslant C s^2 D^2 e^{2s\varphi(T)}+Cs^2M^2 e^{-2s\alpha(t_{0})}.
    \end{align*}
    Next,
    
    \begin{equation*}\label{52}
        \begin{aligned}
              \|\partial_{t}^2 u\|^2_{L^2(t_{0},T,L^2_{\frac{1}{a}}(0,1))}&\leqslant C D^2e^{2s\varphi(T)}+CM^2e^{-2s\alpha(t_{0})}\\& \leqslant C D^2 e^{3s\varphi(T)}+CM^2e^{-s\alpha(t_{0})}
        \end{aligned}
    \end{equation*}
   for all $s\geqslant s_0$, and 
    
     \begin{equation*}\label{53}
        \begin{aligned}
            \|\partial_{t}^2 m\|^2_{L^2(t_{0},T,L^2_{a}(0,1))}&\leqslant C sD^2e^{2s\varphi(T)}+CsM^2e^{-2s\alpha(t_{0})}\\ & \leqslant C D^2 e^{3s\varphi(T)}+CM^2e^{-s\alpha(t_{0})}
        \end{aligned}
    \end{equation*}
    for all $s\geqslant s_0$. Since for $\psi=u,m$,
    $$ \partial_t \psi(x,t_{0})=\int_{T}^{t_{0}}\partial_{t}^2\psi(x,t)dt+\partial_{t}\psi(x,T), \quad x\in (0,1),$$
    we have
    \begin{align*}
        \| \partial_t u(\cdot,t_{0})\|_{\frac{1}{a}}^2\leqslant C\| \partial_t^2u\|^2_{L^2(t_{0},T,L^2_{\frac{1}{a}}(0,1))}+C  \| \partial_t u(\cdot,T)\|_{\frac{1}{a}}^2,
    \end{align*}
    and 
    \begin{align*}
        \| \partial_t m(\cdot,t_{0})\|_{a}^2\leqslant C\| \partial_t^2m\|^2_{L^2(t_{0},T,L^2_{a}(0,1))}+C  \| \partial_t m(\cdot,T)\|_{a}^2.
    \end{align*}
    Then,
    \begin{equation}\label{54}
        \begin{aligned}
             \|  \partial_{t}u(\cdot,t_{0})\|_{\frac{1}{a}}^2&\leqslant  C D^2 e^{3s\varphi(T)}+CM^2e^{-s\alpha(t_{0})}+ C  \| \partial_t u(\cdot,T)\|_{\frac{1}{a}}^2\\& \leqslant C D^2 e^{3s\varphi(T)}+CM^2e^{-s\alpha(t_{0})}
        \end{aligned}
    \end{equation}
    for all $s\geqslant s_0$, and 
        \begin{equation}\label{55}
        \begin{aligned}
             \| \partial_{t}m(\cdot,t_{0})\|_{a}^2&\leqslant  C D^2 e^{3s\varphi(T)}+CM^2e^{-s\alpha(t_{0})}+ C  \| \partial_t m(\cdot,T)\|_{a}^2\\& \leqslant C D^2 e^{3s\varphi(T)}+CM^2e^{-s\alpha(t_{0})}
        \end{aligned}
    \end{equation}
     for all $s\geqslant s_0$.  In the previous two inequalities, we used  
     $$\| \partial_t m(\cdot,T)\|_{a}^2 + \| \partial_t u(\cdot,T)\|_{\frac{1}{a}}^2 \leqslant D^2 \leqslant C D^2 e^{3s\varphi(T)}.
     $$
     Using, for $\psi=u,m$,
     $$ \psi(x,0)=\int_{T}^0\partial_{t}\psi(x,\nu)d\nu+\psi(x,T), \quad x\in (0,1),$$
     we obtain
     \begin{align*}
         \int_{0}^1 \frac{1}{a(x)}\vert u(x,0)\vert^2dx\leqslant &  2 \int_{0}^1 \frac{1}{a(x)}\left\vert \int_{T}^0u_{t}(x,\nu)d\nu\right\vert^2 dx +2 \int_{0}^1 \frac{1}{a(x)}\vert u(x,T)\vert^2dx \\ &\leqslant C \int_{0}^{T} \| u_{t}(\cdot,\nu)\|_{\frac{1}{a}}^2d\nu+ C \| u(\cdot,T)\|_{\frac{1}{a}}^2,
     \end{align*}
     and 
     \begin{align*}
         \int_{0}^1 a(x)\vert m(x,0)\vert^2dx\leqslant &  2 \int_{0}^1 a(x)\left\vert \int_{T}^0m_{t}(x,\nu)d\nu\right\vert^2 dx +2 \int_{0}^1 a(x)\vert m(x,T)\vert^2dx \\ &\leqslant C \int_{0}^{T} \| m_{t}(\cdot,\nu)\|_{a}^2d\nu+ C \| m(\cdot,T)\|_{a}^2.
     \end{align*}
    According to \eqref{54}, we obtain
    \begin{equation*}
        \begin{aligned}
            \| u(\cdot,0)\|_{\frac{1}{a}}^2&\leqslant C_{1} D^2\int_{0}^Te^{3s\varphi(T)}d\nu+C_{1}M^2\int_0^Te^{-s\alpha(\nu)}d\nu + C \| u(\cdot,T)\|_{\frac{1}{a}}^2\\& \leqslant C_{1}D^2e^{C_2s}+C_{1}M^2\int_0^Te^{-s\alpha(\nu)}d\nu,
        \end{aligned}
    \end{equation*}
and using \eqref{55}, we deduce
    \begin{equation*}
        \begin{aligned}
            \| m(\cdot,0)\|_{a}^2&\leqslant C_{1} D^2\int_{0}^Te^{3s\varphi(T)}d\nu+C_{1}M^2\int_0^Te^{-s\alpha(\nu)}d\nu + C \| m(\cdot,T)\|_{a}^2\\& \leqslant C_{1}D^2e^{C_2s}+C_{1}M^2\int_0^Te^{-s\alpha(\nu)}d\nu.
        \end{aligned}
    \end{equation*}
Then,
    \begin{equation}\label{56}
        \begin{aligned}
            \| u(\cdot,0)\|_{\frac{1}{a}}^2+  \| m(\cdot,0)\|_{a}^2\leqslant C_{1}D^2e^{C_2s}+C_{1}M^2\int_0^Te^{-s\alpha(\nu)}d\nu.
        \end{aligned}
    \end{equation}
We calculate  this integral $\int_{0}^T e^{-s\alpha(\nu)}d\nu$, using the following change of variable $\tau = \alpha(\nu)= e^{\lambda \nu}-1$
$$ \int_{0}^T e^{-s\alpha(\nu)}d\nu=\frac{1}{\lambda} \int_{0}^Te^{-s\tau}\frac{1}{1+\tau}d\tau \leqslant \frac{1}{\lambda}\left[\frac{e^{-s\tau}}{s}\right]_{\tau =e^{\lambda T}-1}^{\tau=0}\leqslant \frac{1}{s\lambda}.$$
Hence, \eqref{56} yields
\begin{align}\label{57}
        \| u(\cdot,0)\|_{\frac{1}{a}}^2+  \| m(\cdot,0)\|_{a}^2\leqslant C_{3}D^2e^{C_2s}+\frac{C_3}{s}M^2
\end{align}
for all $s\geqslant s_0.$ Then we obtain \eqref{57} for all $s>0$ by replacing $s:=s+s_{0}.$
We suppose that $D<1$. Setting 
$$ s=\left(\log\frac{1}{D}\right)^{\alpha}>0,$$
with $0<\alpha<1$, we have
\begin{align*}
    e^{C_{2}s}D^2=&\exp\left(C_{2}\left(\log\frac{1}{D}\right)^{\alpha}\right) D^2\\ &
    =\exp\left(-2\left(\log\frac{1}{D}\right)+ C_{2}\left(\log\frac{1}{D}\right)^{\alpha}\right).
\end{align*}
Moreover, there exists a constant $C_{4}>0$ such that $$ e^{-2\xi+C_{2}\xi^{\alpha}}\leqslant \frac{C_{4}}{\xi^{\alpha}}\;\;\;\; \text{for all}\;\; \xi >0.$$
Hence, 
$$e^{C_{2}s}D^2 \leqslant C_{4}\left(\log\frac{1}{D}\right)^{-\alpha} ,$$
 and 
$$C_{3}D^2e^{C_{2}s}+ \frac{C_{3}}{s}M_{1}^2 \leqslant C'C_{4}\left(\log\frac{1}{D}\right)^{-\alpha}+C_{3}M_{1}^2\left(\log\frac{1}{D}\right)^{-\alpha}.$$
Thus,  we have the result.
\end{proof}

\section{Stability of the backward problem for nonlinear MFG system}\label{sec4}
Now we prove the main stability result for the backward problem of degenerate nonlinear MFG systems by combining the Carleman estimates established in section \ref{Subsec2.3} for the linearized system.
We recall the degenerate MFG system
\begin{empheq}[left = \empheqlbrace]{alignat=2}
\begin{aligned}\label{eq1to10}
& u_t(x,t) +a(x) u_{xx}(x,t)-\frac{p(x,t) }{2}\vert u_{x}\vert^2+   d(x,t) m = F(x,t), &&\quad \text { in } Q,\\
& m_{t}(x,t) -\left(a(x) m(x,t)\right)_{xx} -\left( p(x,t)mu_{x}\right)_{x}= G(x,t), &&\quad \text { in } Q,\\
&  u= am=0, &&\quad \text { on } \Sigma,
\end{aligned}
\end{empheq}
where we assume that $p,p_{x},d , c\in L^{\infty}(Q)$. We further assume the following assumptions
$$\vert p(x,t)\vert \leqslant C \sqrt{a(x)} ,\;\;\;\; \vert d(x,t)\vert \leqslant Ca(x) ,\;\;\;\;\vert a_x(x)\vert \leqslant C \sqrt{a(x)}. 
$$
Now, we state the main stability result.
\begin{theorem} Let $t_{0}\in (0,T)$ and $(u_{k},m_{k})\in H^2_{\frac{1}{a}}(0,1)\times H_{2,a}(0,1)$, $k=1,2,$ satisfy \eqref{eq1to10} such that  
$$
\| u_{k}(\cdot,0)\|_{1,\frac{1}{a}}\leqslant M, \qquad \| m_{k}(\cdot,0)\|_{1,a}\leqslant M,
$$
and  
$$ \partial_{x}^l u_{k} \in L^{\infty}(Q),  \;\;\partial_{x}^n m_{k} \in L^{\infty}(Q),\;\; l=0,1,2,\;\; n=0,1.
$$
 Then there exist  constants $C>0$ and $\theta \in (0,1)$ depending on $t_{0}$ and $M$ such that 
\begin{align*}
   \| u_{2}(\cdot,t_0)-u_{1}(\cdot,t_0)\|_{\frac{1}{a}}+ \| m_{2}(\cdot,t_{0})-m_{1}(\cdot,t_{0})\|_{a}\leqslant C\left( D_{0}^{\theta}+D_{0}\right),
\end{align*}
where 
$$ D_{0}=\| u_{2}(\cdot,T)-u_{1}(\cdot,T)\|_{1,\frac{1}{a}}+\| m_{2}(\cdot,T)-m_{2}(\cdot,T)\|_{1,a}.
$$
\end{theorem}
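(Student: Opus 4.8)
The plan is the standard linearization argument: subtract the two solutions and invoke the linearized stability result of Theorem~\ref{38}. First I would set $u:=u_2-u_1$ and $m:=m_2-m_1$ and subtract the two copies of \eqref{eq1to10}. Since $(u_1,m_1)$ and $(u_2,m_2)$ solve the system with the \emph{same} sources $F$ and $G$, the pair $(u,m)$ solves a system with zero right-hand sides; the homogeneous boundary conditions $u=am=0$ on $\Sigma$ are preserved, the difference of the data enters only through $D_0$, and $(u,m)\in H^2_{\frac1a}(0,1)\times H_{2,a}(0,1)$ because these are linear spaces.

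The second step is to recognise $(u,m)$ as a solution of the linearized system \eqref{eq1to155} with admissible coefficients. For the HJB equation the quadratic term linearizes exactly:
$$-\frac{p}{2}\bigl(|u_{2,x}|^2-|u_{1,x}|^2\bigr)=-\frac{p}{2}\,(u_{1,x}+u_{2,x})\,u_x=:d_1 u_x,$$
while the coupling gives $d_2 m$ with $d_2=-d$. For the FP equation, using $m_2u_{2,x}-m_1u_{1,x}=m_2 u_x+u_{1,x}m$ and differentiating,
$$-\bigl(p(m_2u_{2,x}-m_1u_{1,x})\bigr)_x=-p m_2\,u_{xx}-(p m_2)_x\,u_x-p u_{1,x}\,m_x-(p u_{1,x})_x\,m,$$
which puts the difference equation into the form \eqref{eq1to155} with $\rho=p m_2$, $c_2=p_x m_2+p m_{2,x}$, $c_1=-p u_{1,x}$, $b=p_x u_{1,x}+p u_{1,xx}$, and zero source.

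The third step is to verify the structural hypotheses of Section~\ref{sec2}. From $|p|\le C\sqrt a$ and $u_{k,x}\in L^\infty(Q)$ one gets $|d_1|\le C\sqrt a$; from $|d|\le Ca$ one gets $|d_2|\le Ca$; from $|p|\le C\sqrt a$ and $u_{1,x}\in L^\infty(Q)$ one gets $|c_1|\le C\sqrt a$; and since $p,p_x\in L^\infty(Q)$ while $m_2,m_{2,x},u_{1,x},u_{1,xx}\in L^\infty(Q)$, all of $\rho,c_2,b$ lie in $L^\infty(Q)$. The bound $|a_x|\le C\sqrt a$ is assumed. Finally $\|u(\cdot,0)\|_{1,\frac1a}\le\|u_1(\cdot,0)\|_{1,\frac1a}+\|u_2(\cdot,0)\|_{1,\frac1a}\le 2M$ and likewise $\|m(\cdot,0)\|_{1,a}\le 2M$, so Theorem~\ref{38} applies to $(u,m)$ (with $M$ there replaced by $2M$) and gives
$$\|u(\cdot,t_0)\|_{\frac1a}+\|m(\cdot,t_0)\|_{a}\le C\bigl(D_0^\theta+D_0\bigr),$$
which is exactly the assertion.

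I expect the only real work to be the bookkeeping of steps two and three: one must expand the two nonlinear terms and check that every coefficient produced carries the correct degenerate weight — in particular that the first-order pieces $-\frac p2(u_{1,x}+u_{2,x})$ and $-p u_{1,x}$ inherit the $\sqrt a$ decay of $p$ rather than spoiling the structure, and that the top-order piece $p m_2 u_{xx}$ yields a \emph{bounded} coefficient $\rho$ rather than an unbounded one. This is precisely the reason the theorem postulates $\partial_x^l u_k,\partial_x^n m_k\in L^\infty(Q)$ for $l\le 2$, $n\le 1$: without the $L^\infty$ bound on $u_{1,xx}$ the coefficient $b$ would be uncontrolled, and without it on $m_{2,x}$ the coefficient $c_2$ would be.
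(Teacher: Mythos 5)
Your proposal is correct and follows essentially the same route as the paper: set $u=u_2-u_1$, $m=m_2-m_1$, linearize the quadratic Hamiltonian via $|u_{2x}|^2-|u_{1x}|^2=(u_{1x}+u_{2x})u_x$ and the divergence coupling via $m_2u_{2x}-m_1u_{1x}=m_2u_x+u_{1x}m$, check that the resulting coefficients $d_1,c_1,b,c_2,\rho$ satisfy the structural bounds of Section~\ref{sec2}, and then invoke the linearized stability machinery (the paper applies Lemma~\ref{33} and repeats the argument of Theorem~\ref{38}, which is the same as your direct citation of Theorem~\ref{38} with $M$ replaced by $2M$). The coefficient identifications and the role of the $L^\infty$ hypotheses on $\partial_x^l u_k$ and $\partial_x^n m_k$ match the paper exactly.
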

\begin{proof}
Setting $u=u_{2}-u_{1}$ and $m=m_{2}-m_{1},$ we have
\begin{empheq}[left = \empheqlbrace]{alignat=2}
\begin{aligned}\label{eq1to111}
&  u_t +a(x) u_{xx}- \frac{p(x,t)}{2}\left(u_{1x}+u_{2x}\right)u_{x}+  d(x,t) m = 0, \qquad \text{ in } Q,\\
&  m_{t} -\left(a(x) m\right)_{xx} -\left(pm_{2} u_{xx}+(p_{x} m_{2}+pm_{2x})u_{x}+(pu_{1xx}+p_xu_{1x})m+pm_xu_{1x}\right)= 0, \;\text { in } Q ,\\
&  u= am=0, \quad \text { on } \Sigma .  
\end{aligned}
\end{empheq}
Setting $$d_{1}=\frac{p}{2}\left(u_{1x}+u_{2x}\right), \qquad c_1=pu_{1x},$$
and 
$$b=pu_{1xx}+p_xu_{1x}, \qquad c_{2}=p_{x} m_{2}+pm_{2x},  \qquad \rho=pm_{2},$$
we have $ b,c_2 \in L^{\infty}(Q)$ and 
$$\vert d_{1}(x,t) \vert\leqslant \left\vert  \frac{p(x,t)}{2}\left(u_{1x}+u_{2x}\right)\right\vert\leqslant C \sqrt{a(x)},$$
$$ \vert c_1(x,t) \vert= \vert pu_{1x}\vert \leqslant C \sqrt{a(x)}, \qquad  \vert d(x,t)\vert \leqslant Ca(x), \qquad  \vert \rho(x,t)\vert \leqslant C.$$
Hence, applying Lemma \ref{33}, we have
\begin{equation*}
    \begin{aligned}
        &\int_{Q}\left\lbrace\left(\frac{1}{a(x)}\vert u_{t}\vert^2+ a(x)\vert u_{xx}\vert^2+ s\lambda \varphi \vert u_{x}\vert^2+ \frac{s^2\lambda^2\varphi^2}{a(x)}\vert u\vert^2\right)e^{2s\varphi}dxdt\right\rbrace\\ &+\int_{Q}\left\lbrace\frac{1}{s\varphi}\left(a(x)\vert (am)_{xx}\vert^2+a(x)\vert m_{t}\vert^2 \right)e^{2s\varphi}dx dt+\int_{Q} \lambda \vert (am)_{x}\vert^2 e^{2s\varphi} dx dt+\int_{Q} s\lambda^2 \varphi a(x)\vert m\vert^2 e^{2s\varphi} dxdt\right\rbrace\\&\leqslant Cs\left( s\lambda\varphi(T)\| u(\cdot,T)\|_{\frac{1}{a}}^2+\| u(\cdot,T)\|_{1,\frac{1}{a}}^2 \right)e^{2s\varphi(T)} \\&+Cs\left( s\lambda\| u(\cdot,0)\|_{\frac{1}{a}}^2+\| u(\cdot,0)\|_{1,\frac{1}{a}}^2\right)e^{2s}  +C \left(s\lambda \varphi(T)\| m(\cdot,T)\|_{a}^2+\| m(\cdot,T)\|_{1,a}^2\right)e^{2s\varphi(T)}\\&+C\left( s\lambda \| m(\cdot,0)\|_{a}^2 +\| m(\cdot,0)\|_{1,a}^2\right)e^{2s}.
    \end{aligned}
\end{equation*}
Now similar arguments as in the proof of Theorem \ref{38} complete the proof. 
\end{proof}

\section{Conclusion and perspectives}\label{sec5}
The problem discussed in this paper is an inverse backward problem for second-order degenerate Mean-Field Game systems in one dimension. That is the determination of intermediate states from the final output data. We established conditional stability of Hölder and logarithmic rates by the Carleman estimates approach on suitable $L^2$-weighted spaces. This has been achieved using suitable assumptions on the coefficients and employing the weight function $e^{\lambda t}$ with the second large parameter $\lambda>0$ which is crucial in absorbing bad terms. It is worth mentioning that inverse problems for Mean-Field Game systems with degenerate diffusion are not well studied along the lines of our paper. Future works will investigate some numerical aspects of such problems. Although we have focused on Dirichlet boundary conditions, our results extend to Neumann boundary conditions by similar techniques. Finally, possible extensions to backward problems of the 2D MFG systems with degenerate diffusion in a rectangular domain deserve further investigation.

\end{document}